\newtheorem{thm}{Theorem}[section]
\newtheorem{prop}[thm]{Proposition}
\newtheorem{cor}[thm]{Corollary}
\newtheorem{rem}[thm]{Remark}
\numberwithin{equation}{section}
\newcommand{\bel}[1]{\begin{equation}\label{#1}}
\newcommand{\be}{\begin{equation}}
\newcommand{\qe}{\end{equation}}
\newcommand{\ba}{\begin{eqnarray}}
\newcommand{\ea}{\end{eqnarray}}
\newcommand{\rf}[1]{(\ref{#1})}
\def\C{\mathbb C}
\def\R{\mathbb R}
\def\M{\mathcal M}
\def\A{\mathcal A}
\def\H{\mathbb H}
\def\Z{\mathbb Z}
\def\Q{\mathbb Q}
\def\h{\mathfrak h}
\def\proof{\noindent{\em Proof.} }
\def\endproof{\vspace{.1in}}
\begin{document}

\title{Universal moduli spaces of Riemann surfaces}

\author{Lizhen Ji, \\ Department of Mathematics\\
University of Michigan, USA\\ \\
 J\"urgen Jost \\ Max Planck Institute for Mathematics in the Sciences\\
Leipzig, Germany}

\maketitle

\begin{abstract}
We construct a moduli space for Riemann surfaces that is universal in the sense that it represents compact Riemann surfaces of any finite genus. This moduli space is stratified according to genus, and it carries a metric and a measure that induce a Riemannian metric and a finite volume measure on each stratum. Applications to the Plateau-Douglas problem for minimal surfaces of varying genus and to the partition function of Bosonic string theory are outlined. The construction starts with a universal moduli space of Abelian varieties. This space carries a structure of an infinite dimensional locally symmetric space which is of interest in its own right. The key to our construction of the universal moduli space then is the Torelli map that assigns to every Riemann surface its Jacobian and its extension to the Satake-Baily-Borel compactifications.  
\end{abstract}

{\bf Mathematics subject classification:} 32G15, 14H15, 14D21, 81T30, 11G15, 14H40\\

Keywords: Moduli spaces of Riemann surfaces and Abelian varieties

\tableofcontents
\section{Introduction}

For every $g\geq 0$, let $\M_g$ be the moduli space of compact
Riemann surfaces of genus $g$.
When $g=0$, $\M_g$ consists of only one point, which corresponds to the Riemann sphere $S^2=\C \cup \{\infty\}$.
When $g\geq 1$, it is known that $\M_g$ is noncompact since
Riemann surfaces of positive genus can degenerate. 

A basic result about $\M_g$ is that 
that $\M_g$ is a complex orbifold and a quasi-projective
variety.  See \cite{hm} and references there.

For many applications, we need to compactify $\M_g$, in particular, to obtain compactifications which are  
projective varieties over $\C$, or even defined over some specific number fields.

Besides the well-known  Deligne-Mumford compactification
$\overline{\M}_g^{DM}$ by
adding stable Riemann surfaces of  Euler characteristic $2-2g$,
 there is also the Satake-Baily-Borel 
 compactification $\overline{\M}_g^{SBB}$ whose boundary points
 correspond to unions of compact Riemann surfaces of Euler
 characteristic strictly greater than $2-2g$.
 
 $\overline{\M}_g^{SBB}$ is constructed by Satake-Baily-Borel
 compactification of the Siegel modular variety $\A_g$,
 which is also the moduli space of principally polarized abelian varieties of dimension $g$.
 The compatification  $\overline{\M}_g^{SBB}$ will be described in more detail
 in the next section

The two compactifications  $\overline{\M}_g^{DM}$
and  $\overline{\M}_g^{SBB}$ are different in several aspects.
\begin{enumerate}
\item $\overline{\M}_g^{DM}$ is a complex orbifold,
but $\overline{\M}_g^{SBB}$ is highly singular when $g\geq 2$,

\item $\overline{\M}_g^{DM}$  is the moduli space of stable
Riemann surfaces of Euler characteristic $2-2g$ for $g\geq 2$,
but $\overline{\M}_g^{SBB}$ is not a moduli space 
for a modular functor (see \cite[p. 45]{hm}). 

\item The boundary points of $\overline{\M}_g^{DM}$ correspond to
Riemann surfaces with punctures by pinching along  
loops of Riemann surfaces $\Sigma_g$, and the boundary points of $\overline{\M}_g^{SBB}$ correspond to unions of compact 
Riemann surfaces which are obtained from
the punctured Riemann surfaces in the boundary of 
$\overline{\M}_g^{DM}$ by forgetting (or filling in)
 the punctures.
 Therefore, there is a surjective map $\overline{\M}_g^{DM} \to \overline{\M}_g^{SBB}$, which is not injective when $g\geq 2$.
 
\end{enumerate}

For many applications, especially those in algebraic geometry and arithmetic geometry, the above properties,
especially the modular property, make $\overline{\M}_g^{DM}$ more desirable.
On the other hand, for some applications to
string theory and minimal surfaces,
$\overline{\M}_g^{SBB}$ is more suitable for the reason that
only compact Riemann surfaces appear on the boundary
of $\overline{\M}_g^{SBB}$ and only compact Riemann 
surfaces matter, for the basic reason that  the Riemann extension theorem can
remove the punctures.

{Let us explain this in more detail. Let us start with geometric analysis, with the Plateau-Douglas problem in Euclidean space (or some Riemannian manifold). Here, one wants to find conditions for a configuration of $k$ disjoint oriented Jordan curves in Euclidean space to bound a minimal surface of a given genus $g$ and $k$ boundary curves. A minimal surface in this context is a harmonic and conformal map from some  Riemann surface $\Sigma$ satisfying the above Plateau type boundary condition \cite{dhs}.  In order to break the diffeomorphism invariance of the area integral, one works with the Dirichlet integral, that is, for a map $h:C\to \R^N$, we consider
\bel{intro1}
S(h,C):=\int_C |dh|^2; 
\qe
by conformal invariance, we do not need to specify a conformal metric on $\Sigma$. That is, when $\rho(z)dzd\bar{z}$ is a Riemannian metric on $\Sigma$ compatible with its conformal structure, then $\int_C \rho(z)^{-1}\frac{\partial h}{\partial z}\frac{\partial h}{\partial \bar{z}} \rho(z)dzd\bar{z} =S(h,C)$ independently of the particular choice of $\rho$.

When one takes a minimizing sequence for the Dirichlet integral \rf{intro1}, again such a sequence could degenerate and end up with a limit of smaller topological type, that is, an element of the boundary of $\M_g$.}

{Since here we  discuss surfaces with boundary that have to map to  the
given disjoint Jordan curves,  we need  moduli spaces of Riemann surfaces with boundary. These can be obtained from those without boundary as follows (see e.g. \cite{j1} for more details). Let $\Sigma$ be a compact Riemann surface of genus $g$ with $k$ boundary curves. We then form its Schottky double $\Sigma'$, a compact Riemann surface without boundary of genus $2g+k-1$, by reflecting $\Sigma$ across its boundary, that is, by taking a copy $\Sigma''$ of $\Sigma$ with the opposite orientation and identifying $\Sigma$ and $\Sigma''$ along their corresponding boundaries. $\Sigma'$ then possesses an anticonformal involution $i$ that interchanges $\Sigma$ and $\Sigma'$, leaving their common boundaries fixed. (When we equip $\Sigma$ with a constant curvature metric for which all boundary curves are geodesic, we can also perform these constructions within the Riemannian setting. $i$ then is an orientation reversing isometry whose fixed point is a collection of closed geodesics that constitute the common boundary of $\Sigma$ and $\Sigma''$. Conversely, if we have a compact Riemann surface $\Sigma'$ of genus $2g+k-1$ with such an involution that leaves $k$ geodesics fixed, then $\Sigma'$ can be seen as the union of two isometric surfaces of genus $g$ with $k$ boundary curves. From this construction, we see that the moduli space of genus $g$ surfaces with $k$ boundary curves is the moduli space of genus $2g+k-1$ surfaces without boundary with an involution that leaves $k$ disjoint simple loops (closed geodesics with respect to a constant curvature metric) fixed. The moduli space of surfaces with such an involution $\M_{g,k}$ is a totally real subspace of the moduli space $\M_{2g+k-1}$, and all properties of the latter apply to the former with obvious modifications, except that $\M_{g,k}$, as a totally real subspace, does not possess a complex structure. Therefore, in the sequel, we only discuss the spaces $\M_g$. }

{Returning to the  discussion of \rf{intro1} when the underlying Riemann surface $\Sigma$ degenerates in $\M_g$, the question arises which boundary of $\M_g$ should we take here when we want to consider limits of sequences of degenerating Riemann surfaces and assign a value to the Dirichlet integral on such a limit.  The key observation is that  if we take a limit of harmonic maps, we should expect the limit also to be harmonic. That is, we get a harmonic map from a degenerated surface. When we consider that object as an element of the Deligne-Mumford compactification $\overline{\M}_g^{DM}$, it would have two punctures. But a (bounded) harmonic map extends across such a puncture, and therefore, it does not feel the effect of that puncture. Thus, the punctures are irrelevant, and the natural domain for our harmonic map is a lower topological type Riemann surface without any punctures, that is, an element of the  Satake-Baily-Borel $\overline{\M}_g^{SBB}$
 rather than the Deligne-Mumford compactification \cite{js}. Moreover, if one wants to look at minimal surfaces of arbitrary genus, one should have a universal moduli space that contains Riemann surfaces of all posssible genera. In some sense, we want to let $g\to \infty$. But from the Deligne-Mumford compactification we would then encounter boundary strata with ever more punctures, and in the limit infinitely many. This seems undesirable. Thus, this provides motivation for our construction of a universal moduli space. 
 
String theory \cite{pol} can be considered as a quantization of the Plateau-Douglas problem just described \cite{j2}. And when one wants to compute corresponding partition and correlation functions, one should take a sum over all possible genera, possibly with suitable weights for the different values of $g$, of suitable integrals over the individual $M_g$ \cite{dhp}, as we shall explain in more detail in Section \ref{sec:int}. But then again, a boundary stratum of $\M_{g}$ should be $\M_{g-1}$ and or a   product  $\M_{g_1}\times \cdots \times \M_{g_k}$ with $g_1 +
\cdots+g_k=g$, and not a space blown up from the latter by introducing additional puncture positions  as in the Deligne-Mumford
compactification. In any case, the situation for genus $g-1$ should be structurally the same as for genus $g$, and not more complicated. Otherwise, we cannot meaningfully let $g\to \infty$. }

{For these applications, it is therefore important to consider surfaces of different genus together. For instance, for a general treatment of the Plateau-Douglas problem for minimal surfaces, we wish to have a Conley type  index formula that involves minimal surfaces of all finite genera simultaneously. This  is because given some configuration of Jordan curves in Euclidean space or a Riemannian manifold, we may not know a priori what the largest genus of a minimal surface bounded by this configuration is. In particular, we want to have a global Euler characteristic involving all critical points of the Dirichlet integral of any genus. Also, when we vary the configuration of Jordan curves, while the global Euler characteristic will stay the same, the solutions can change their genera or bifurcate in other ways. Likewise, when one wants to do string theory in a non-perturbative manner, one wants to have a formula that includes all finite genera simultaneously, instead of an expansion in terms of the genus.}

The above consideration shows that it is desirable to
construct a {\em universal compactified moduli space $\overline{\M}_\infty$}
satisfying the following properties:
\begin{enumerate}
\item $\overline{\M}_\infty$ is a connected stratified complex analytic space
such that the closure of each stratum is a projective variety.
\item For every $g$, $\overline{\M}_g^{SBB}$ is embedded into
$\overline{\M}_\infty$, and $\overline{\M}_\infty$ is the union of
these subsets $\overline{\M}_g^{SBB}$, $g\geq 0$.
\item {There is a natural measure on $\overline{\M}_\infty$ 
which induces compatible measures on different strata,
and with respect to the induced measure, every stratum has finite volume. 
}
\end{enumerate}

We note that the connectedness condition in (1) excludes  the trivial
construction by taking the
disjoint union of $\overline{\M}_g^{SBB}$, $g\geq 0$. 

The idea is to construct a natural embedding $\overline{\M}_g^{SBB}\subset \overline{\M_{g+1}}^{SBB}$ for every $g$, which are compatible
for all $g$,  then take 
their union under such inclusion obtain a desired space.
This can be obtained by using another  family of locally symmetric
spaces $\A_g$ and their compactifications 
$\overline{\A}_g^{SBB}$, and the Jacobian map
$\M_g\to \A_g$ allows us to pass such a construction to
$\overline{\M}_g^{SBB}$. This  infinite
dimensional locally symmetric space structure will allow us to define a (positive semi-definite) Riemannian metric and 
a natural measure on the universal moduli space $\overline{\M}_\infty$. A slight modification, consisting in pulling back the metric from its Jacobian back to each individual Riemann surface instead of pulling the metric from the moduli space of principally polarized Abelian varieties back to the moduli space of Riemann surfaces, will even lead to a positive definite Riemannian metric.

\section{Satake-Baily-Borel compactification of the moduli space 
 $\M_g$}

 As mentioned before, the Deligne-Mumford compactification
 $\overline{\M_g}^{DM}$ is well-known, while
 the Satake-Baily-Borel compactification $\overline{\M_g}^{SBB}$
 is less known. Hence, we give the definition of the latter in some detail.
 
 Let $$\h_g=\{X+iY\mid X, Y \text{\ symmetric\ } n\times n \text{\ matrices}, Y>0\}$$ be the {\em Siegel upper half space of degree} $g$.
 It is a Hermitian symmetric space of noncompact type.
 The symplectic group $Sp(2g, \R)$ acts holomorphically and transitively 
 on it with the stabilizer of $i I_g$ isomorphic to $U(g)$.
 Therefore, $$\h_g=Sp(2g, \R)/U(g).$$
 The Siegel modular group $Sp(2g, \Z)$ acts properly and holomorphically
 on $\h_g$,
 and the quotient $Sp(2g, \Z)\backslash \h_g$ is a complex orbifold and equal to $\A_g$. See the book \cite{nam} for more detail.
 
 Since Abelian varieties can degenerate, $\A_g$ is noncompact.
 It is a quasi-projective variety and can be compactified to
 a normal projective variety $\overline{\A}_g^{SBB}$.
 This compactification of $\A_g$ was first constructed as a topological
 space by Satake in \cite{sa} and as a normal
 projective space by Baily in \cite{ba}. Since this is a special
 case of the Baily-Borel compactification for general
 arithmetic locally Hermitian symmetric spaces in \cite{bb},
 we call it {\em Satake-Baily-Borel compactification} and denote it by
 $\overline{\A}_g^{SBB}$. 
 
 Since $\h_g$ is a Hermitian symmetric space of noncompact type,
 it can be embedded into its compact dual, under the Borel embedding. (Note that when $g=1$, $\h_1$ is the Poincar\'e upper halfplane $\H^2=\{z\in \C\mid \text{Im} (z)>0\}$, and its compact dual
 is the Riemann sphere $\C\cup \{\infty\}$).
 Denote the closure  of $\h_g$ under this embedding by $\overline{\h_g}$.
 Then the symplectic group $Sp(2g, \R)$ acts on the compactification 
 $\overline{\h_g}$. 
 
 For every $g'< g$, we can embed $\h_{g'}$ into the boundary
 of $\overline{\h_g}$ in infinitely many different ways.
 The most obvious one, usually called the {\em standard embedding} 
 \cite{nam},  is as follows:
 
 \begin{equation}\label{embed-bdy}
 \h_{g'}\hookrightarrow \overline{\h_g}, \quad X'+iY' \mapsto 
 \begin{pmatrix} X' & 0\\ 0 & 0
 \end{pmatrix}
 +i \begin{pmatrix} Y' & 0\\ 0 & 0
 \end{pmatrix}. 
 \end{equation}
 
 Under the action of $Sp(2g, \R)$, we get other embeddings.
 To compactify $\A_g$, we only need 
 the translates of $\h_{g'}$, $g'< g$,
 under $Sp(2g, \Z)$. These boundary components
 are called  {\em rational
 boundary components} of $\h_g$.
 
 Denote the union of $\h_g$ with these rational boundary components by $\overline{\h_g}_{,\Q}$.
 Then there is a Satake topology on $\overline{\h_g}_{,\Q}$
 such that $Sp(2g, \Z)$ acts continuously
 with a compact quotient, which is the compactification
 $\overline{\A}_g^{SBB}$.
 
 Note that the action of $Sp(2g, \Z)$ on $\overline{\h_g}_{,\Q}$
 is not proper, but the quotient $Sp(2g, \Z)\backslash \overline{\h_g}_{,\Q}$ is a Haudorff topology.
 A good example to keep this in mind is to consider the case $g=1$, or equivalently the case of $SL(2, \Z)$ acting on
 the upper halfplane $\H^2$.
 The boundary of the closure $\overline{\H^2}$ of $\H^2$ in the extended complex
 plane $\C \cup \{+\infty\}$ is equal to $\partial \H^2=\R \cup \{\infty\}$. In the boundary $ \partial \H^2$, rational boundary components
 correspond to the rational points $\Q\cup\{\infty\}$.
 On the other hand, the union $\H^2\cup \Q\cup\{\infty\}$
 with the induced subspace topology from $\overline{\H^2}$
 is not the Satake topology. In fact, with respect to the induced
 subspace topology,  its quotient by
 $SL(2, \Z)$ is non-Hausdorff.
 In the Satake topology of $\H^2\cup \Q\cup\{\infty\}$,
 a basis of neighborhoods of every rational boundary
 point $\xi \in \Q\cup \{\infty\}$ is given by horoballs based
 at $\xi$.

 It is known that  for every  compact Riemann
surface $\Sigma_g$ of genus $g\geq 1$,
the complex torus 
$H^0(\Sigma, \Omega)^*/H_1(\Sigma_g, \Z)$
has a canonical principal polarization induced from the
cup product of $H_1(\Sigma, \Z) \times H^1(\Sigma, \Z)\to 
H^2(\Sigma, \Z)\cong \Z$.
It is called the {\em Jacobian variety} of $\Sigma_g$ and denoted
by $J(\Sigma_g)$.

Using this, we can define the  {\em Jacobian} (or {\em period,
Torelli}) map, 
$$J: \M_g \to \A_g, \quad \Sigma_g \mapsto J(\Sigma_g).$$ 

By Torelli theorem, $J$ is injective (see \cite{gh} for example).
By \cite{ba2}, the image $J(\M_g)$ is an algebraic subvariety.
By \cite{os},  $J$ is an embedding between complex algebraic
varieties. 
Therefore, the closure of $J(\M_g)$ in $\overline{\A}_g^{SBB}$
is a projective variety,  and it gives  a compactification of $\M_g$,
which is  the {\em Satake-Baily-Borel compactification} $\overline{\M}_g^{SBB}$ mentioned above.

\section{Universal moduli spaces of abelian varieties}
 
 In this section, we
 construct an infinite dimensional  locally symmetric space
 $\A_\infty$ and its completion $\overline{\A}^{SBB}_\infty$
 such that:
 \begin{enumerate}
 \item For every $g$, there is a canonical inclusion $\A_g\hookrightarrow \A_\infty$.
 \item For every $g$, there is an canonical embedding $\A_g\hookrightarrow \A_{g+1}$,
 and these inclusions are compatible for all $g$.
 \item $\A_\infty$ is the union of these images $\A_g$.
 \item $\overline{\A}^{SBB}_\infty$ is a stratified complex analytic
 space such that the closure of  each stratum is a complex projective space.
 \item $\A_\infty$ is open and dense in $\overline{\A}^{SBB}_\infty$.
 For every $g$, the closure of $\A_g$ in $\overline{\A}^{SBB}_\infty$
 is $\overline{\A}_g^{SBB}$.
 
 \item $\overline{\A}^{SBB}_\infty$ is the union of the subspaces
 $\overline{\A}_g^{SBB}$, $g\geq 0$.
 \end{enumerate}
 
 For this purpose, we first construct  an infinite dimensional symmetric space $\h_\infty$ from the family of Siegel upper halfspaces $\h_g$, $g\geq 1$.
 
 For every $g\geq 1$, we embed $\h_g$ into $\h_{g+1}$ as follows:
 
 \begin{equation}\label{embed-h}
 X+i Y \in \h_g \mapsto \begin{pmatrix} X & 0 \\ 0 & 0 \end{pmatrix}
 + i
 \begin{pmatrix} Y & 0 \\ 0 & 1 \end{pmatrix}
\in \h_{g+1}.
 \end{equation}
 
 Clearly, the image of $\h_g$ in $\h_{g+1}$ is a totally geodesic
 subspace.
 Then we obtain a direct sequence of increasing Hermitian symmetric
 spaces of noncompact type:
 
 $$\h_1 \hookrightarrow \h_2 \hookrightarrow \cdots.$$
 
 Then the direct limit  with the natural topology
 $\h_\infty=\lim_{g\to \infty}\h_g$ is an infinite
 dimensional smooth manifold locally based on $\C^\infty$,
 the complex vector space of finite sequences with the finite
 topology. See \cite{gl} \cite{ha}.
 
 In our case, we can be more specific with the space $\h_\infty$ about
 its nature as an infinite dimensional manifold, which will be important for applications we have in mind.
 
\begin{equation}\label{h-infty}
\h_\infty=\{ \begin{pmatrix} X & 0\\ 0 & 0\end{pmatrix}
+ i \begin{pmatrix} Y & 0\\ 0 & I_\infty\end{pmatrix}
\mid X, Y \text{ are $g\times g$ symmetric matrices for some $g$}, Y>0\}.
\end{equation}

Abstractly, we have the following result.

\begin{prop}
The limit space $\h_\infty$ is an { infinite dimensional Hermitian symmetric space}. 
\end{prop}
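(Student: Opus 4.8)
The plan is to exhibit the full symmetric-space structure at each finite level $\h_g$, to check that every ingredient of that structure is compatible with the embeddings \rf{embed-h}, and then to pass to the direct limit. Recall that a Hermitian symmetric space is a complex manifold carrying a compatible (Hermitian, indeed K\"ahler) metric together with, at each point $p$, an involutive holomorphic isometry $s_p$ having $p$ as an isolated fixed point. First I would make the homogeneous picture explicit: the basepoint $iI_g\in\h_g$ has stabilizer $U(g)$, and I would produce a compatible system of block embeddings $Sp(2g,\R)\hookrightarrow Sp(2g+2,\R)$ and $U(g)\hookrightarrow U(g+1)$ for which \rf{embed-h} is equivariant and sends $iI_g$ to $iI_{g+1}$. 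This identifies $\h_g\hookrightarrow\h_{g+1}$ with a map of homogeneous spaces and yields $\h_\infty=Sp(\infty,\R)/U(\infty)$, where $Sp(\infty,\R)=\lim_g Sp(2g,\R)$ and $U(\infty)=\lim_g U(g)$ are the direct limit groups.

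Next I would transport each structure through the limit. The embedding \rf{embed-h} is given by affine formulas in the matrix entries and is therefore holomorphic; hence the complex structures on the $\h_g$ cohere, and $\h_\infty$ inherits a complex structure modeled on $\C^\infty$ with the finite topology, as recorded in \rf{h-infty}. For the metric I would use the invariant K\"ahler metric $ds^2=\operatorname{tr}(Y^{-1}\,dZ\,Y^{-1}\,d\bar Z)$ on $\h_g$ and check that it restricts correctly under \rf{embed-h}: at the image basepoint $Y=I_{g+1}$ it is the trace form on symmetric matrices, the image of the tangent space of $\h_g$ is exactly the upper-left block, and on that block the $(g+1)$-dimensional trace form restricts to the $g$-dimensional one; by equivariance this persists at every point. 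Thus the embeddings are not only totally geodesic (as already noted) but isometric, and the metrics assemble into a well-defined Hermitian K\"ahler metric on $\h_\infty$, positive definite on each finite-dimensional tangent space.

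It remains to produce the symmetries. Given $p\in\h_\infty$, we have $p\in\h_g$ for some $g$, and the geodesic symmetry $s_p^{(g)}$ is a holomorphic involutive isometry of $\h_g$ with $p$ as its only fixed point near $p$. Because $\h_g$ is totally geodesic in $\h_{g'}$ for $g'>g$ and $s_p^{(g')}$ is the geodesic reflection through $p$, one has $s_p^{(g')}\big|_{\h_g}=s_p^{(g)}$, so these symmetries cohere into a single involution $s_p$ of $\h_\infty$ that is again holomorphic and isometric; its fixed point is isolated in the finite topology because any nearby fixed point lies in some $\h_{g'}$ and is then fixed by $s_p^{(g')}$, forcing it to equal $p$. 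This verifies the symmetric-space axioms for $\h_\infty$.

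The main obstacle is conceptual rather than computational: one must fix a workable definition of ``infinite dimensional Hermitian symmetric space'' in the absence of local compactness. In the finite topology $\h_\infty$ is not a Fr\'echet manifold and the metric is only a weak metric, so the statements about the exponential map, geodesic completeness, and the continuity and isolatedness claims above must be phrased and verified directly in the inductive-limit topology rather than inherited from the standard finite-dimensional Riemannian theory. Concretely, the one genuine verification hidden among the routine steps is the metric compatibility, namely that the invariant metrics match under \rf{embed-h} with the correct normalization so that the restriction is isometric and not merely conformal, since a mismatch there would obstruct the very definition of the limit metric.
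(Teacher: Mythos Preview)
Your proposal is correct and follows essentially the same approach as the paper: the key point in both is that the totally geodesic, Hermitian embeddings $\h_g\hookrightarrow\h_{g+k}$ force the geodesic symmetries $s_p^{(g)}$ to cohere into a well-defined involutive holomorphic isometry on the limit. You are considerably more thorough than the paper's one-sentence sketch---explicitly verifying the metric and complex-structure compatibility and flagging the foundational issues in the inductive-limit category---but the underlying argument is the same.
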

\begin{proof}
Since each $\h_g$ is a totally geodesic Hermitian subspace of 
a larger Hermitian symmetric
space $\h_{g+k}$, $k\geq 0$,  it can be seen that  every point in $\h_\infty$ is an isolated fixed
 point of an involutive  holomorphic isometry  of $\h_\infty$,
 and hence  $\h_\infty$ is an  infinite dimensional  symmetric space.  
 \end{proof}

  On the other hand, 
 the description of $\h_\infty$ in Equation \ref{h-infty} shows that 
 it is not a usual infinite
 dimensional Hermitian symmetric space modelled on Hilbert
 manifolds as in \cite{kau} \cite{tu} \cite{up}, or other completions
 and extensions in other papers.
 
 For example, one natural extension of $\h_\infty$ is to consider
 the symmetric space defined by
 $$\h_\infty'=\{X+iY\mid X, Y \text{\ are\ } \infty\times \infty \text{-symmetric matrixes
 whose entries satisfy some}$$
 $$\text{ convergence properties, 
 and  finite major minors of $Y$ are positive definite.} \}$$
 
 This is more common in the theory of infinite dimensional
 symmetric spaces. 
 A more restricted extension of $\h_\infty$ is to consider the 
 symmetric space
 
 $$\h_\infty''=\{X+iY\in \mid X, Y \in\h_\infty',  X, Y-I_\infty
 \text{\ have only finitely many nonzero entries}.\}$$
 From Equation \ref{h-infty}, it is clear that we have strict inclusion:
 $$\h_\infty\subset \h_\infty'' \subset \h_\infty'.$$

 As mentioned before, for each $g$, $Sp(2g, \Z)$ acts properly,
 homolorphically and isometrically 
 on $\h_g$, and the quotient $Sp(2g, \Z)\backslash
 \h_g$  is a Hermitian locally symmetric space and
 is equal to $\A_g$.
 
 We need to construct a discrete group $Sp(\infty, \Z)$ which 
 also acts
 properly, holomorphically and isometrically  on $\h_\infty$ such that the quotient $Sp(\infty, \Z)\backslash \h_\infty$ gives the desired space $\A_\infty$, an infinite dimensional Hermitian 
 locally symmetric space.
 
 For each $g$, every element of $Sp(2g, \R)$ can be written
 as a $2\times 2$ block matrix  consisting of $n\times n$ matrices,
 $$M=\begin{pmatrix} A & B\\ C & D\end{pmatrix},$$
 such that $${}^t M \begin{pmatrix} 0 & I_g\\-I_g & 0\end{pmatrix}
 M=\begin{pmatrix} 0 & I_g\\-I_g & 0\end{pmatrix},$$
 which is equivalent to the equations:
 $${}^t A C= {}^t C A, {}^t B D= {}^t D B, {}^t A D- {}^t C B= I_g.$$
 
 Then we get an embedding
 $$Sp(2g, \R) \to Sp(2(g+1), \R), \quad
 \begin{pmatrix} A & B\\ C & D\end{pmatrix}
 \to \begin{pmatrix} 
 \begin{pmatrix} A & 0\\ 0 & 1\end{pmatrix} 
 & 
 \begin{pmatrix} B& 0\\ 0 & 0\end{pmatrix}\\ 
 \begin{pmatrix} C& 0\\ 0 & 0\end{pmatrix}
 & 
 \begin{pmatrix} D& 0\\ 0 & 1\end{pmatrix}
 \end{pmatrix}.
 $$
 
 This induces an embedding
 
 $$Sp(2g, \Z)\to Sp(2(g+1), \Z).$$
 
 Under these embeddings of $\h_g\hookrightarrow \h_{g+1}$
 and $Sp(2g, \Z)\to Sp(2(g+1), \Z)$, it is clear that
 the action of $Sp(2g, \Z)$ on $\h_{g+1}$
 leaves the subspace $\h_g$ stable,
 and we obtain a canonical embedding
 
 \begin{equation}\label{inclusion-a}
  \A_g=Sp(2g, \Z)\backslash \h_g \to
 \A_{g+1}=Sp(2(g+1), \Z)\backslash \h_{g+1}.
 \end{equation}

 From these increasing sequences of groups $Sp(2g, \R)$ and 
  $Sp(2g, \Z)$, we obtain two
 limit groups
 $$Sp(\infty, \R)=\lim_{g\to \infty} Sp(2g, \R), \quad
  Sp(\infty, \Z)=\lim_{g\to \infty} Sp(2g, \Z).$$

  These groups can be described explicitly as follows:
 $$  Sp(\infty, \R)=\{ 
  \begin{pmatrix} 
  \begin{pmatrix} A & 0\\ 0 & I_\infty \end{pmatrix}

  & 
 \begin{pmatrix} B & 0\\ 0 & 0 \end{pmatrix}
  \\
  \begin{pmatrix} C & 0\\ 0 & 0 \end{pmatrix} 
  & 
  \begin{pmatrix} D & 0\\ 0 & I_\infty \end{pmatrix}  
  \end{pmatrix}
 \}, $$
  where for some $g\geq 1$,
   $A, B, C$ and $D$ are $g\times g$ block matrices,  and they satisfy
 $$   \begin{pmatrix} A & B\\ C & D \end{pmatrix}   
  \in Sp(2g, \R).   $$
  
  Similarly, 
  $$  Sp(\infty, \Z)=\{ \begin{pmatrix} 
  \begin{pmatrix} A & 0\\ 0 & I_\infty \end{pmatrix}

  & 
 \begin{pmatrix} B & 0\\ 0 & 0 \end{pmatrix}
  \\
  \begin{pmatrix} C & 0\\ 0 & 0 \end{pmatrix} 
  & 
  \begin{pmatrix} D & 0\\ 0 & I_\infty \end{pmatrix}  
  \end{pmatrix}
  \}, $$
  where $A, B, C, D$ are $g\times g$-block integral matrices and satisfy
 $$   \begin{pmatrix} A & B\\ C & D \end{pmatrix}   
  \in Sp(2g, \Z).   $$

  It is clear that $Sp(\infty, \R)$ acts transitively on $\h_\infty$,
  and hence $\h_\infty$ can be considered as a symmetric space
  associated with the infinite dimensional Lie group $Sp(\infty, \R)$.
  
  Though $Sp(\infty, \Z)$ is not a finitely generated group, it is countable
  and is a discrete subgroup of the Lie group  $Sp(\infty, \R)$.
  It acts properly on $\h_\infty$,
  and the quotient  space $Sp(\infty, \Z)\backslash \h_\infty$
  is Hausdorff,
  and  it can be seen that 
  
 \begin{equation}\label{a-defi}
 Sp(\infty, \Z)\backslash \h_\infty=\lim_{g\to \infty}
 \A_g,
 \end{equation}
 where the right hand side is defined by the inclusion 
 in Equation  \ref{inclusion-a}.  Denote {\em this limiting space}
  by $\A_\infty$. We call it {\em the universal moduli space of principally
  polarized abelian varieties}. 
  
  By the explicit description of $\h_\infty$ in Equation \ref{h-infty},
  we can see that $\h_\infty$ is a complex manifold and a Hermitian symmetric space, and the action of $Sp(\infty, \Z)$
  on $\h_\infty$ is holomorphic.  
 
 \begin{rem}
 {\em If we take the limit space $\lim_{g\to \infty}
 \A_g$ directly, we only get a topological space, or some kind of
 infinite dimensional orbitfolds, since each $\A_g$ is not smooth.
 The realization as a quotient of $\h_\infty$ gives it more structures, which will be needed for constructing natural measures
 on $\A_\infty$ and its completion $\overline{\A}_\infty^{SBB}$.
 On the other hand, it might be
 possible to generalize the constructions \cite{gl} \cite{ha} to the setup
 of orbifolds.
 In any case, the triple of an infinite dimensional Lie group
 $Sp(\infty, \R)$, an infinite dimensional symmetric space
 $\h_\infty$, and an arithmetic group $Sp(\infty, \Z)$ is appealing.}
 \end{rem}
 
 \begin{rem}
 {\em The group $Sp(\infty, \Z)$ can be thought of as an
 arithmetic subgroup of the infinite dimensional Lie group
 $Sp(\infty, \R)$. 
 Though infinite dimensional symmetric spaces
 have been studied by many people, see for example, 
 \cite{kau} \cite{tu} \cite{up}, it seems that their quotients 
 by analogues of arithmetic subgroups of linear 
 algebraic groups have not been studied.
 On the other hand, it is known that arithmetic locally symmetric spaces have much richer structures than
 symmetric spaces of noncompact types, and they occur naturally
 as important spaces ranging from number theory, algebraic
 geometry, differential geometry, to topology etc.
 
 }
 \end{rem}

 \begin{prop} The space $\A_\infty$ is a complex space and 
 has a canonical stratification induced
 from the canonical stratification of the subspaces
 $\A_{g+1}-\A_g$, $g\geq 1$.
 \end{prop}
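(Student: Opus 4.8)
The plan is to prove the two assertions separately, treating $\A_\infty$ throughout as the direct limit $\lim_{g\to\infty}\A_g$ along the closed holomorphic embeddings of Equation \ref{inclusion-a}, since it is this structure that makes both claims tractable.

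First I would record the complex structure. Each $\A_g=Sp(2g,\Z)\backslash\h_g$ is a normal quasi-projective variety, i.e. a complex orbifold, and I claim the inclusion $\A_g\hookrightarrow\A_{g+1}$ is a closed holomorphic embedding of complex spaces. Holomorphicity is inherited from the totally geodesic holomorphic embedding \ref{embed-h} of $\h_g$ into $\h_{g+1}$, which is equivariant for $Sp(2g,\Z)\hookrightarrow Sp(2(g+1),\Z)$, together with the fact that $Sp(\infty,\Z)$ acts holomorphically on $\h_\infty$. For closedness I would identify the image with the set of isomorphism classes of principally polarized abelian varieties of the form $A\times E_i$, where $E_i=\C/(\Z+i\Z)$ is the fixed elliptic curve produced by the block $\mathrm{diag}(0)+i\,\mathrm{diag}(1)$ adjoined in \ref{embed-h}; uniqueness of the decomposition of a ppav into indecomposable factors shows the map is injective, and a limit of such products that stays inside $\A_{g+1}$ (rather than escaping to its SBB boundary) is again such a product, so the image is closed. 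Consequently $\A_\infty$ is a direct limit of complex spaces along closed holomorphic embeddings, hence a complex (ind-)space. Equivalently, since $\h_\infty$ is a complex manifold by \ref{h-infty} and the preceding Proposition, and $Sp(\infty,\Z)$ acts holomorphically and properly with finite stabilizers, the quotient $\A_\infty$ inherits the complex structure of the quotient, locally modeled on $\Gamma_x\backslash U$ with $\Gamma_x$ finite.

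Next I would build the stratification from the filtration $\A_1\subset\A_2\subset\cdots$. By the previous step each $\A_g$ is closed in $\A_{g+1}$, and since $\A_g\cap\A_{g'}=\A_{\min(g,g')}$ is closed in every $\A_{g'}$, the colimit topology makes each $\A_g$ closed in $\A_\infty$. Hence every difference $\A_{g+1}-\A_g$ is locally closed in $\A_\infty$; being locally closed in the quasi-projective variety $\A_{g+1}$, it is itself quasi-projective and carries its canonical stratification. With $\A_0=\emptyset$, the decomposition $\A_\infty=\coprod_{g\geq 0}(\A_{g+1}-\A_g)$ shows that the union over $g$ of these canonical stratifications partitions $\A_\infty$ into locally closed complex subspaces. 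To see that this partition is a genuine stratification I would verify the frontier condition: because $\A_{g+1}$ is closed in $\A_\infty$, the closure in $\A_\infty$ of a stratum $S\subset\A_{g+1}-\A_g$ equals its closure in $\A_{g+1}$, so $\overline S$ splits into strata of $\A_{g+1}-\A_g$ together with a part contained in $\A_g$, and an induction on $g$ identifies the latter with strata of the lower differences, provided the canonical stratifications are chosen compatibly with the filtration (for instance as a common Whitney refinement of the algebraic pairs $(\A_{g+1},\A_g)$).

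The main obstacle I anticipate is not a single computation but the question of what \emph{stratification} should mean for the infinite-dimensional $\A_\infty$. Ordinary local finiteness fails: every neighborhood of a point $p\in\A_{g_0}$ in the colimit topology meets the open top stratum of $\A_{g+1}-\A_g$ for all $g\geq g_0$, so infinitely many strata accumulate at $p$. The correct, provable statement is local finiteness \emph{after} restriction to each finite-dimensional $\A_g$, so that the stratification is exhausted by the locally finite stratifications of the $\A_g$ and is compatible with this exhaustion. Showing that this relative notion still yields the frontier condition globally, and that the canonical stratifications of the successive differences $\A_{g+1}-\A_g$ glue coherently under \ref{inclusion-a}, is where the real work lies; by contrast, the complex-space structure and the local closedness of the strata are comparatively routine.
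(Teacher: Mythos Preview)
The paper states this proposition without proof; immediately after the statement it moves on to construct $\overline{\A}_\infty^{SBB}$. So there is no ``paper's own proof'' to compare against --- the authors evidently regard the claim as a direct consequence of the preceding discussion: $\h_\infty$ is a complex manifold (Equation~\ref{h-infty}), $Sp(\infty,\Z)$ acts holomorphically and properly, and the filtration $\A_1\subset\A_2\subset\cdots$ by closed complex subspaces supplies the strata.

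Your proposal is therefore considerably more detailed than anything the paper offers. The arguments you give for the complex structure and for the closedness of each $\A_g$ in $\A_\infty$ are sound. Your identification of the image of $\A_g$ in $\A_{g+1}$ with the locus $\{A\times E_i : A\in\A_g\}$ and the appeal to uniqueness of the decomposition of a principally polarized abelian variety into indecomposable factors is a correct and clean way to see injectivity; closedness can alternatively be read off from the fact that $\h_g$ is a closed totally geodesic complex submanifold of $\h_{g+1}$ together with a stabilizer computation.

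The one place where you go well beyond the paper --- and rightly flag a genuine subtlety --- is local finiteness. The paper uses ``stratification'' informally, essentially meaning a decomposition into locally closed complex subspaces whose closures are again unions of strata, exhausted by the finite-dimensional filtration; it neither states nor attempts a Whitney condition or local finiteness in $\A_\infty$ itself. Your observation that local finiteness fails in the colimit topology but holds after restriction to each $\A_g$ is exactly the correct way to read the statement, and supplies more than the paper does.
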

 
 Next we construct the completion $\overline{\A}_\infty^{SBB}$
  of $\A_\infty$.
 For this purpose, we follow the standard procedure of compactifications of arithmetic locally symmetric spaces
 in \cite{bb} (see also \cite{bj} for other references). 
 
 As mentioned before, the closure of $\h_g$ in its compact dual
 gives a compactification $\overline{\h_g}$,
 and the standard embedding of $\h_{g'}$, $g'< g$, into the
 boundary of $\overline{\h_g}$ in Equation \ref{embed-bdy}
 and the translates by $Sp(2g, \Z)$ of these standard
 boundary components give 
 all the {\em rational boundary components} of $\h_g$.
 The union of $\h_g$ with the rational boundary components
 gives a partial compactification $\overline{\h_g}_{,\Q}$ with 
 the Satake topology such that
 the quotient
 $Sp(2g, \Z)\backslash \overline{\h_g}_{,\Q}$ is 
 $\overline{\A}_g^{SBB}$.
 
 For our purpose, we need to show that these constructions 
 for $\A_g$ are
 compatible with respect to natural embedding between them when $g$ increases.
 
 \begin{prop}\label{compatible} For every $g$, 
 under the inclusion $\h_g\hookrightarrow \h_{g+1}$ 
 in Equation \ref{embed-h},
 the closure of $\h_g$ in $\overline{\h_{g+1}}_{,\Q}$ is equal to
 $\overline{\h_g}_{,\Q}$. Consequently,
 the closure of $\A_g$ in $\overline{\A}_{g+1}^{SBB}$ is equal to
 $\overline{\A}_{g}^{SBB}$.
 \end{prop}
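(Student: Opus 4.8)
The plan is to reduce the assertion about $\A_g$ to the corresponding assertion about $\h_g$ in the Satake topology, and to prove the latter by tracking how the embedding \ref{embed-h} interacts with the rational boundary components. I would first fix the symplectic picture. Writing $\C^{2(g+1)}=\C^{2g}\oplus P$ with $P=\langle e_{g+1},f_{g+1}\rangle$ the symplectic plane spanned by the last coordinate pair, the embedding $Sp(2g,\R)\hookrightarrow Sp(2(g+1),\R)$ constructed above acts on $\C^{2g}=P^{\perp}$ and fixes $P$ pointwise, and the image of \ref{embed-h} is exactly $\h_g\times\{i\}$, i.e. the set of $Z\in\h_{g+1}$ whose restriction to $P$ equals the fixed point $i\in\h_1=\h_P$. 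Thus the embedded $\h_g$ is an \emph{interior} totally geodesic subspace, cut out by freezing the $P$-coordinate at the symmetric point $i$, rather than a boundary component.

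Next I would match rational boundary components. A rational boundary component of $\h_g$ is indexed by a rational isotropic subspace $V\subset\Q^{2g}$ and equals the Siegel space of $V^{\perp}/V$; the standard ones are those of \ref{embed-bdy}. Since $V\subset P^{\perp}$ remains a rational isotropic subspace of $\Q^{2(g+1)}$, and $P$ is symplectically orthogonal to $P^{\perp}$, one has $V^{\perp_{g+1}}=V^{\perp_g}\oplus P$, hence $V^{\perp_{g+1}}/V=(V^{\perp_g}/V)\oplus P$. Therefore the boundary component $\h_{g_V}$ of $\h_g$ attached to $V$ embeds into the boundary component $\h_{g_V+1}$ of $\h_{g+1}$ attached to the same $V$, and the embedding is again exactly of the form \ref{embed-h} (freeze the $P$-factor at $i$). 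This self-similarity is the structural heart of the argument: it identifies the extended map $\overline{\h_g}_{,\Q}\hookrightarrow\overline{\h_{g+1}}_{,\Q}$ as carrying each stratum $\h_{g'}$ of $\overline{\h_g}_{,\Q}$ into the stratum $\h_{g'+1}$ of $\overline{\h_{g+1}}_{,\Q}$, compatibly with $Sp(2g,\Z)\hookrightarrow Sp(2(g+1),\Z)$ and so preserving rationality of boundary components.

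With this set-theoretic picture I would prove the two inclusions for the closure in the Satake topology. For the inclusion of $\overline{\h_g}_{,\Q}$ into the closure, I would, for each rational $V$, produce a sequence in $\h_g\times\{i\}$ degenerating toward $\h_{g_V}\times\{i\}$: using the explicit horoball description of the Satake topology (as in the $g=1$ example recalled in the previous section), degeneration in $\h_g$ toward the $V$-component corresponds to the imaginary part blowing up exactly along $V\subset P^{\perp}$, and such a sequence converges in $\overline{\h_{g+1}}_{,\Q}$ to the corresponding point of $\h_{g_V}\times\{i\}$. For the reverse inclusion I would use that along any sequence in $\h_g\times\{i\}$ the imaginary part has the block form $\mathrm{diag}(Y_n,1)$ with the $P$-block frozen at $1$, so no degeneration can occur along $P$; hence any Satake limit must lie on a boundary component attached to a rational isotropic subspace contained in $P^{\perp}$, i.e. on one of the $\h_{g_V}\times\{i\}$ already accounted for. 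The main obstacle lies precisely here: one must verify in the Satake topology that no ``tilted'' rational boundary component of $\h_{g+1}$ (one not aligned with $P^{\perp}$) is captured, and dually that every component of $\h_g$ is genuinely approached. This is where I would invoke reduction theory for $Sp(2(g+1),\Z)$ to control all limits of the frozen-coordinate sequences, rather than the purely linear-algebraic stratum matching, which is routine.

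Finally, the assertion for $\A_g$ follows by passing to quotients. The embedding is $Sp(2g,\Z)$-equivariant, so it descends to a continuous map $\overline{\A}_g^{SBB}=Sp(2g,\Z)\backslash\overline{\h_g}_{,\Q}\to\overline{\A}_{g+1}^{SBB}$. This map is injective: on each stratum it is the product-with-fixed-elliptic-curve map $A\mapsto A\times E_i$, where $E_i=\C/(\Z+\Z i)$, and principally polarized abelian varieties decompose uniquely into indecomposables, so $A\times E_i\cong A'\times E_i$ forces $A\cong A'$; moreover distinct source strata map to distinct target strata, so no collisions occur across strata. Since $\overline{\A}_g^{SBB}$ is compact and $\overline{\A}_{g+1}^{SBB}$ is Hausdorff, the image is closed; as it contains $\A_g$ and, by continuity applied to the $\h$-statement, is contained in the closure of $\A_g$, it equals that closure. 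Hence the closure of $\A_g$ in $\overline{\A}_{g+1}^{SBB}$ is $\overline{\A}_g^{SBB}$.
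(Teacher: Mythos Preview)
Your proposal is correct and is essentially a fully worked-out version of the paper's own argument, which consists of a single sentence: ``This can be seen from how the standard boundary components $\h_{g'}$ of $\h_g$, $g'<g$, fit together and degenerate inductively.'' You are doing precisely that---tracking each rational boundary component through the embedding \ref{embed-h} and observing the self-similar structure---only you supply the symplectic framing via rational isotropic subspaces, the explicit two-inclusion argument in the Satake topology, and a separate compactness/Hausdorff/unique-decomposition argument for the descent to $\A_g$, none of which the paper spells out.
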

 \begin{proof}
 This can be seen from how the standard boundary components
 $\h_{g'}$ of $\h_g$, $g'< g$, fit together and degenerate inductively.
 \end{proof}

 \begin{rem}\label{closure-a}
{\em  The compactification $\overline{\A}_{g+1}^{SBB}$
 can be written as a disjoint union
 
 $$\overline{\A}_{g+1}^{SBB}=\A_{g+1} \sqcup \A_g \sqcup \cdots
\sqcup \A_1 \sqcup (\A_0=\{\infty\}).$$
 Note that  $Sp(2, \Z)=SL(2, \Z)$
 and $\h_1$ is the Poincar\'e upper half plane $\H^2$,
 and hence $\A_1\cong SL(2, \Z)\backslash \H^2$ is noncompact
 and can be compactified by adding a cusp point $\{\infty \}$, which is really $\A_0$.  
 This shows that $\A_g$ is embedded into $\overline{\A}_{g+1}^{SBB}$ in two different ways: as an interior space
 through the embedding in Equation \ref{embed-h} and
 in the boundary through the above disjoint decomposition
 (or Equation \ref{embed-bdy}).
 }
 \end{rem}
 
 Once we have the compatibility in Proposition \ref{compatible},
 we can construct a completion $\overline{\A}_\infty^{SBB}$ as follows.
 
 From the increasing sequence of bordifications 
 $$\overline{\h_1}_{, \Q}\hookrightarrow \overline{\h_2}_{, \Q}
 \hookrightarrow \overline{\h_3}_{, \Q} \hookrightarrow \cdots,$$
 we can form 
 $$\overline{\h_\infty}_{, \Q}=\lim_{g\to \infty} \overline{\h_g}_{, \Q}.$$
 
 The space $\overline{\h_\infty}_{, \Q}$ also has a concrete realization similar to that of $\h_\infty$ in Equation \ref{h-infty}:
 
 \begin{equation}
 \overline{\h_\infty}_{, \Q}=\{
 \begin{pmatrix} X & 0\\ 0 & 0\end{pmatrix}
+ i \begin{pmatrix} Y & 0\\ 0 & I_\infty\end{pmatrix}
\mid X+iY \in \overline{\h_g}_{, \Q} \text{\ for some $g$}.\}
\end{equation}

 Taking the quotient by $Sp(\infty, \Z)$, we obtain
 the {\em desired completion} of $\A_\infty$,
 $$\overline{\A}_\infty^{SBB}=Sp(\infty, \Z) \backslash 
 \overline{\h_\infty}_{, \Q}.$$
 
 We note that 
 \begin{equation}
 \overline{\A}_\infty^{SBB}=\lim_{g\to \infty} \overline{\A}_g^{SBB}=\cup_{g\geq 1} \overline{\A}_g^{SBB}
 \end{equation}
  under the inclusion $\overline{\A}_g^{SBB} \hookrightarrow 
  \overline{\A}_{g+1}^{SBB}.$
  
  Motivated by the decomposition of $\overline{\A}_{g+1}^{SBB}$ in Remark \ref{closure-a}, we can obtain a decomposition of $\overline{\A}_\infty^{SBB}$ into an infinite dimensional
  interior and finite dimensional boundary pieces:
 
  \begin{prop}\label{a-appearance}
  The completion $\overline{\A}_\infty^{SBB}$ admits the following
  decomposition, 
  $$\overline{\A}_\infty^{SBB}=\A_\infty \coprod (\A_0 \sqcup \A_1 \sqcup \A_2 \sqcup \cdots),$$
  where the disjoint union 
  $$\sqcup_{g\geq 0} \A_g$$ 
  is the boundary, and $\A_\infty$ is the interior in some sense,
  which can also be decomposed into a non-disjoint union of $\A_g$, $g\geq 0$ (Equation \ref{a-defi}).
    Note  also that 
every $\A_g$ can appear in  
$\overline{\A}_\infty^{SBB}$ in two ways: either in the interior
 $\A_\infty$, or in the boundary $\cup_{g\geq 0} \A_g$.
 \end{prop}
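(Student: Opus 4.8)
The plan is to read the decomposition off directly from the concrete realization of $\overline{\h_\infty}_{, \Q}$ displayed just before the statement, and then to transport it through the quotient by $Sp(\infty, \Z)$. Every point of $\overline{\h_\infty}_{, \Q}$ has imaginary part of the shape $\begin{pmatrix} Y & 0 \\ 0 & I_\infty \end{pmatrix}$ with $X + iY \in \overline{\h_g}_{, \Q}$ for some $g$, so I would first separate the points according to whether the finite block $Y$ is nondegenerate ($Y > 0$) or is one of the degenerate imaginary parts occurring in a rational boundary component of $\overline{\h_g}_{, \Q}$. The nondegenerate locus is exactly $\h_\infty$, whose quotient is the interior $\A_\infty$; the degenerate locus accounts for the boundary. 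Since positive definiteness of the finite block is preserved by $Sp(\infty, \Z)$ and is stable under the inclusions $\overline{\A}_g^{SBB} \hookrightarrow \overline{\A}_{g+1}^{SBB}$, this dichotomy descends to a genuine disjoint splitting of $\overline{\A}_\infty^{SBB}$ into interior and boundary.

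For the interior I would use that, by Equation \ref{embed-h}, the interior embeddings $\A_g \hookrightarrow \A_{g+1}$ are nested, and by Equation \ref{a-defi} their union is $\A_\infty = \lim_{g\to\infty} \A_g$. Here each $\A_g$ appears inside $\A_\infty$ as the locus whose nondegenerate finite block has size $g$, which is the first of the two ways in which $\A_g$ occurs. For the boundary I would invoke Remark \ref{closure-a}, which expresses each $\overline{\A}_g^{SBB}$ as the disjoint union of its open stratum $\A_g$ and the lower strata $\A_{g-1} \sqcup \cdots \sqcup \A_0$, together with Proposition \ref{compatible}, which guarantees that the closures are compatible as $g$ increases. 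Under the inclusions the open stratum of each level lands in the interior $\A_\infty$, while the proper boundary strata match coherently, so that passing to the limit leaves exactly one copy of each $\A_g$ in the boundary. Taking the union over $g$ then produces $\sqcup_{g\geq 0} \A_g$, which is the second way in which $\A_g$ occurs, now via the boundary embedding of Equation \ref{embed-bdy}.

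The main obstacle, and the step demanding real care, is the coherent matching of the boundary strata across the levels $g$. Because each $\A_g$ embeds into $\overline{\A}_{g+1}^{SBB}$ in two genuinely different ways — as an interior subspace through Equation \ref{embed-h} and as a boundary stratum through Equation \ref{embed-bdy} — I must verify that in forming the direct limit these two occurrences are never identified, and that the boundary occurrences stabilize to a single disjoint copy of each $\A_g$ rather than accumulating, merging, or reindexing as $g\to\infty$. The tool for this is to track the degeneration type of the imaginary part (the rank of its finite block, and the number of degenerate directions) as an invariant of the $Sp(\infty,\Z)$-orbit, and to feed the finite-dimensional compatibility of Proposition \ref{compatible} into the limit. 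Once the boundary strata are shown to be separated by this invariant and to stabilize correctly, the disjoint decomposition $\overline{\A}_\infty^{SBB} = \A_\infty \coprod (\sqcup_{g\geq 0} \A_g)$ and the two-fold appearance of each $\A_g$ follow at once.
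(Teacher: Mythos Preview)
The paper does not supply a separate proof of this proposition; it is stated as an immediate consequence of the preceding constructions, in particular of Remark~\ref{closure-a} (the finite-level decomposition $\overline{\A}_{g+1}^{SBB}=\A_{g+1}\sqcup\A_g\sqcup\cdots\sqcup\A_0$) together with Proposition~\ref{compatible} and the definition of $\overline{\A}_\infty^{SBB}$ as the direct limit. Your proposal unpacks exactly this implicit argument---separating interior from boundary via the (non)degeneracy of the finite imaginary block, and then passing the stratification of Remark~\ref{closure-a} through the limit using Proposition~\ref{compatible}---so your approach is essentially the one the paper has in mind, only spelled out with more care than the authors themselves provide.
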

 
 \section{Universal moduli spaces of Riemann surfaces}
 
 In this section, we construct a universal moduli space
 $\M_\infty$ and its completion $\overline{\M}_\infty^{SBB}$
 by using the spaces $\A_\infty$ and $\overline{\A}_\infty^{SBB}$
 constructed in the previous section.
 
 Recall that for every $g$, there is the embedding by the Jacobian
 map
 \begin{equation}\label{j1}
 J: \M_g\to \A_g,
 \end{equation}
 which induces an embedding
 \begin{equation}\label{j2}
 J: \overline{\M}_g^{SBB} \to \overline{\A}_g^{SBB}.
 \end{equation}
 
 We note the following description of the boundary of $\overline{\M}_g^{SBB}$.
 \begin{prop}\label{mg-stratification}
 The boundary of $\overline{\M}_g^{SBB}$
 is the union of $\M_{g_1}\times \cdots\times \M_{g_k}$,
 where $g_1+\cdots + g_k\leq g$, $k\geq 1$.
 The equality occurs only when we pinch homologically
 trivial loops of compact Riemann surfaces $\Sigma_g$ of
 genus $g$.
 \end{prop}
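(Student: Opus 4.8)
The plan is to use that, by its very definition, $\overline{\M}_g^{SBB}$ is the closure $\overline{J(\M_g)}$ of the Torelli image inside the stratified space
$\overline{\A}_g^{SBB}=\A_g\sqcup\A_{g-1}\sqcup\cdots\sqcup\A_0$
(the decomposition of Remark \ref{closure-a}, with $g+1$ replaced by $g$). Consequently the added boundary $\overline{\M}_g^{SBB}\setminus J(\M_g)$ consists precisely of the limits, taken in $\overline{\A}_g^{SBB}$, of sequences $J(\Sigma_t)$ where $\Sigma_t$ is a family of smooth genus-$g$ surfaces degenerating toward $\partial\M_g$. The entire task is therefore to identify, for each type of degeneration, which stratum $\A_{g'}$ the limiting Jacobian lands in and which principally polarized abelian variety it represents.

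First I would reduce to standard degenerations by the stable reduction theorem: any one-parameter degenerating family can be completed to a family of stable curves whose central fibre $C_0$ is a stable curve of arithmetic genus $g$, with normalized components $\tilde C_1,\dots,\tilde C_k$ of geometric genera $g_1,\dots,g_k$ and dual graph $\Gamma$. The arithmetic-genus relation $g=\sum_{i=1}^k g_i+b_1(\Gamma)$ is then the bookkeeping device: it gives at once $\sum_i g_i\le g$, with equality if and only if $b_1(\Gamma)=0$, i.e. $\Gamma$ is a tree, which is exactly the case in which every node is separating, equivalently where the pinched loops are all homologically trivial. This already yields the inequality and the equality clause of the statement, provided the limiting stratum is shown to be $\A_{g'}$ with $g'=\sum_i g_i$.

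The analytic heart is the computation of the limit of the period matrices. Choosing a symplectic homology basis of the generic fibre adapted to the vanishing cycles, one finds that as $t\to 0$ the $b_1(\Gamma)$ directions dual to the non-separating nodes send the corresponding diagonal $b$-periods to $i\infty$, while the complementary block converges to the block-diagonal matrix $\mathrm{diag}(\tau_1,\dots,\tau_k)$, where $\tau_i\in\h_{g_i}$ is a period matrix of $\tilde C_i$. In the partial compactification $\overline{\h_g}_{,\Q}$ this is exactly convergence into the rational boundary component $\h_{g'}$, $g'=\sum_i g_i$, attached by the standard embedding \rf{embed-bdy}, at the point representing the product $\prod_i J(\tilde C_i)$. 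Passing to the quotient by $Sp(2g,\Z)$, the limit in $\overline{\A}_g^{SBB}$ is the point $[\prod_i J(\tilde C_i)]$ in the stratum $\A_{g'}$. By the Torelli theorem (\cite{gh}) the ppav $\prod_i J(\tilde C_i)$ determines $(\tilde C_1,\dots,\tilde C_k)$ up to reordering, so the associated boundary stratum of $\overline{\M}_g^{SBB}$ is the image of $\M_{g_1}\times\cdots\times\M_{g_k}$ under the product Jacobian map inside $\A_{g'}\subset\overline{\A}_g^{SBB}$. Note that in the equality case $\sum_i g_i=g$ these points already lie in the top stratum $\A_g$ as reducible ppavs, so they are genuine boundary points of $\overline{\M}_g^{SBB}$ even though they sit in the interior of the ambient space.

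Finally I would assemble the pieces: ranging over all stable-curve types realizes every tuple $(g_1,\dots,g_k)$ with $\sum_i g_i\le g$ and $k\ge 1$ (the single trivial tuple $(g)$ recovering the interior $\M_g$), and conversely each such tuple is realized by an explicit degeneration — iterated separating pinches for the equality case, together with a further non-separating pinch to lower the rank for the strict inequality. Hence the boundary is exactly the asserted union. The main obstacle is the period-matrix degeneration of the third paragraph: one must make the symplectic basis genuinely adapted to the combinatorics of $\Gamma$ and verify that, under the Satake topology, the semi-abelian limit of $J(\Sigma_t)$ maps to its abelian part $\prod_i J(\tilde C_i)$, i.e. that the toric part of dimension $b_1(\Gamma)$ is precisely what the Satake-Baily-Borel compactification forgets. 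This rests on the theory of degenerating abelian varieties and on the compatibility of the Satake topology with period degenerations, for which one invokes \cite{nam} together with the constructions of \cite{bb}.
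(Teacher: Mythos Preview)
Your argument is correct and follows the same underlying idea as the paper's own justification---namely, analyze what happens to the Jacobian when loops on $\Sigma_g$ are pinched, distinguishing the separating (homologically trivial) from the non-separating case, and iterate. The paper's treatment is a two-sentence sketch (pinching a non-separating loop drops the genus by one, pinching a separating loop yields $\Sigma_{g_1}\cup\Sigma_{g_2}$ with $g_1+g_2=g$, then iterate and forget punctures), whereas you supply the machinery that makes this rigorous: stable reduction, the dual-graph identity $g=\sum_i g_i+b_1(\Gamma)$, the asymptotics of the period matrix in an adapted symplectic basis, and the identification of the Satake limit with the abelian part $\prod_i J(\tilde C_i)$ of the semi-abelian degeneration. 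Your observation that in the equality case $\sum_i g_i=g$ the limit sits in the interior stratum $\A_g$ as a reducible ppav, rather than in a lower $\A_{g'}$, is a point the paper leaves implicit.
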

 
 To see this, we note that if we pinch a homologically nontrivial loop
 on $\Sigma_g$, then we get a surface $\Sigma_{g-1}$
 on the boundary of $\overline{\M}_g^{SBB}$. 
 If we pinch one homologically trivial loop, then we
 get a union of two compact Riemann surfaces
 $\Sigma_{g_1}$, $\Sigma_{g_2}$, where $g_1+g_2=g$.
 By iteration, we get the above picture. (Note that if we use the Deligne-Mumford compactification of $\M_g$, then we get punctured Riemann surfaces, and the locations of the punctures
 make the dimension of the boundary components bigger.
  Here we forget these punctures).
 
 \begin{prop}
 For every pair of natural numbers $g' < g$,  if the moduli space $\M_{g'}$
 appears in the boundary of $\overline{\M}_g^{SBB}$ as in the 
 above proposition, then the closure of $\M_{g'}$
 is equal to $\overline{\M}_{g'}^{SBB}$. 
 \end{prop}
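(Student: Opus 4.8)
The plan is to deduce the statement from the compatibility already established for abelian varieties (Proposition \ref{compatible} and Remark \ref{closure-a}) by transporting it across the Torelli embedding $J\colon \overline{\M}_g^{SBB}\hookrightarrow \overline{\A}_g^{SBB}$ of Equation \ref{j2}. Recall that $\overline{\M}_g^{SBB}$ is by construction the closure of $J(\M_g)$ inside $\overline{\A}_g^{SBB}$, so it is a \emph{closed} subset of the latter. The copy of $\M_{g'}$ in the boundary of $\overline{\M}_g^{SBB}$ arises by pinching $g-g'$ homologically nontrivial loops (Proposition \ref{mg-stratification}), and the first step is to show that its image under $J$ lands in the boundary stratum $\A_{g'}$ of $\overline{\A}_g^{SBB}$ (the stratum appearing in the decomposition of Remark \ref{closure-a}) and agrees there with the genus-$g'$ Jacobian image $J(\M_{g'})$.

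The geometric content lies in identifying the restriction of the extended Torelli map to this boundary stratum with the genus-$g'$ Torelli map $\M_{g'}\to\A_{g'}$, where the target is the boundary copy of $\A_{g'}$. This is the compatibility of the period map with degeneration: when $\Sigma_g$ acquires a node along a homologically nontrivial loop, its normalization has genus $g-1$, the generalized Jacobian is an extension of $J(\Sigma_{g-1})$ by $\C^\ast$, and the Satake-Baily-Borel boundary point of $\A_g$ records precisely the abelian part $J(\Sigma_{g-1})\in\A_{g-1}$. Iterating over the $g-g'$ pinched loops identifies the stratum with $J(\M_{g'})\subset\A_{g'}$, where $\A_{g'}$ sits inside $\partial\overline{\A}_g^{SBB}$ via the standard rational boundary embedding of Equation \ref{embed-bdy}.

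Once this identification is in place, the remainder is formal. By Proposition \ref{compatible} applied inductively (equivalently, by Remark \ref{closure-a}), the closure of the boundary stratum $\A_{g'}$ inside $\overline{\A}_g^{SBB}$ equals $\overline{\A}_{g'}^{SBB}$, which is itself a closed subset. Since $\overline{\M}_g^{SBB}$ is closed in $\overline{\A}_g^{SBB}$, the closure of the boundary $\M_{g'}$ taken in $\overline{\M}_g^{SBB}$ coincides with its closure taken in $\overline{\A}_g^{SBB}$; by the previous step the latter is the closure of $J(\M_{g'})$, which is contained in the closed set $\overline{\A}_{g'}^{SBB}$ and therefore equals the closure of $J(\M_{g'})$ inside $\overline{\A}_{g'}^{SBB}$. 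But by the very definition of the Satake-Baily-Borel compactification of $\M_{g'}$, that closure is exactly $\overline{\M}_{g'}^{SBB}$, which gives the claim.

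The main obstacle I expect is the geometric step: pinning down that the extended Torelli map carries the boundary stratum $\M_{g'}$ onto the Jacobian locus inside the boundary stratum $\A_{g'}$, i.e.\ that taking Jacobians commutes with the Satake-Baily-Borel boundary structure on both sides. This requires the known description of how Jacobians of degenerating curves enter the Satake boundary (passage to the abelian part of the semi-abelian degeneration), together with the continuity of the extension $J$ to $\overline{\M}_g^{SBB}$ in Equation \ref{j2}. The purely topological manipulation of closures, by contrast, is routine once the closed-embedding and stratum-compatibility facts are granted.
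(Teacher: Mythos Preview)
The paper does not actually prove this proposition: it is stated without argument and followed only by the remark that it expresses ``one nice inductive property of the compactification $\overline{\M}_g^{SBB}$.'' Your proposal therefore supplies what the paper omits, and the strategy---transport the question across the extended Torelli embedding $J:\overline{\M}_g^{SBB}\hookrightarrow\overline{\A}_g^{SBB}$ and use the hereditary structure of the Satake--Baily--Borel boundary on the $\A$-side---is correct and is the natural way to see the statement. The geometric input you isolate (that pinching $g-g'$ non-separating loops sends the Torelli map to the genus-$g'$ Torelli map into the boundary copy $\A_{g'}\subset\partial\overline{\A}_g^{SBB}$, via the abelian part of the semi-abelian degeneration) is indeed the substantive step, and the closure bookkeeping afterward is routine.

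One small correction of references: Proposition~\ref{compatible}, which you invoke, concerns the \emph{interior} embedding $\h_g\hookrightarrow\h_{g+1}$ of Equation~\eqref{embed-h}, not the boundary embedding~\eqref{embed-bdy}. What your argument actually uses is that the closure of a boundary stratum $\A_{g'}\subset\partial\overline{\A}_g^{SBB}$ is $\overline{\A}_{g'}^{SBB}$. This is the standard hereditary property of Satake--Baily--Borel compactifications (implicit in the decomposition of Remark~\ref{closure-a} and in the general construction of \cite{bb}), so the argument stands, but the citation should point there rather than to Proposition~\ref{compatible}.
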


This is one nice inductive property of the compactification
$\overline{\M}_g^{SBB}$. No new types of Riemann surfaces and 
their moduli spaces appear.

For the purpose of constructing a universal
moduli space of Riemann surfaces, a seemingly unfortunate fact
is that there is no obvious inclusion of $\M_g$ into $\M_{g+1}$
as in the case of $\A_g\subset \A_{g+1}$.
On the other hand, the Jacobian maps $J$ in Equations \ref{j1}, \ref{j2} overcome this difficulty. Later, this turns out to be a nicer
property in terms of constructing stratifications. 

Consider the subspaces $J(\M_g)$ of $\A_g\subset \A_\infty$ and 
$J(\overline{M}_g^{SBB})$ of $\overline{\A}_g^{SBB}
\subset \overline{\A}_\infty^{SBB}$, $g\geq 1$.
Define
$$\M_\infty=\cup_{g\geq 1} J(\M_g) \subset \A_\infty,$$
and 
$$\overline{\M}_\infty^{SBB}=\cup_{g\geq 1} J(\overline{\M}_g^{SBB})
\subset \overline{\A}_\infty^{SBB}.$$

After defining these spaces, it is crucial to understand their properties. The next result is probably the most basic or the minimal requirement, otherwise
we could take the trivial construction of a disjoint union of $\overline{\M}_g^{SBB}$,
which is definitely not what we want.

\begin{prop}
The subspace $\overline{\M}_\infty^{SBB} \subset \overline{\A}_\infty^{SBB}$ is connected.
\end{prop}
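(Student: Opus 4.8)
The plan is to realize $\overline{\M}_\infty^{SBB}$ as a union of connected pieces that share one common point, and then to apply the elementary principle that a union of connected subsets with a point in common is connected. The pieces are the images $J(\overline{\M}_g^{SBB})$, $g\geq 1$. First I would check that each such piece is connected: the moduli space $\M_g$ is irreducible and hence connected, so its compactification $\overline{\M}_g^{SBB}$ is connected, and the extended Torelli map $J$ of Equation \ref{j2} is continuous, whence $J(\overline{\M}_g^{SBB})$ is connected.

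The crux is to produce a single point lying in every piece. I claim that the deepest cusp $\A_0$---the zero-dimensional stratum in the boundary decomposition of $\overline{\A}_\infty^{SBB}$ given in Proposition \ref{a-appearance}---does the job. By Proposition \ref{mg-stratification}, the boundary of $\overline{\M}_g^{SBB}$ contains, as its lowest stratum, configurations of total genus $0$ obtained by pinching a maximal system of loops; under the Torelli map these fully degenerate Jacobians collapse to the trivial zero-dimensional principally polarized abelian variety, which is exactly $\A_0$. Hence $\A_0\in J(\overline{\M}_g^{SBB})$ for every $g\geq 1$. Moreover, Proposition \ref{compatible} ensures that the bordifications are compatible as $g$ grows, so the cusp reached inside each $\overline{\A}_g^{SBB}$ is one and the same point of $\overline{\A}_\infty^{SBB}$, namely the unique deepest cusp. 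One may equally argue by induction on $g$: the base case $g=1$ is immediate since $J(\overline{\M}_1^{SBB})=\overline{\A}_1^{SBB}=\A_1\sqcup\A_0$, while the inductive step uses that pinching a homologically nontrivial loop on $\Sigma_{g+1}$ produces the boundary stratum $\M_g$, whose closure in $\overline{\M}_{g+1}^{SBB}$ is $\overline{\M}_g^{SBB}$ by the preceding proposition.

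With both ingredients in hand, the conclusion is immediate: $\overline{\M}_\infty^{SBB}=\bigcup_{g\geq 1}J(\overline{\M}_g^{SBB})$ is a union of connected sets each containing the common point $\A_0$, and is therefore connected.

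I expect the main obstacle to be the bookkeeping in the second step, namely verifying that the extended Torelli map genuinely carries the deepest boundary stratum of $\overline{\M}_g^{SBB}$ onto the single cusp $\A_0$ and that this cusp does not depend on $g$. This is delicate precisely because, as stressed in Remark \ref{closure-a}, each $\A_{g'}$ occurs in $\overline{\A}_\infty^{SBB}$ in two guises---once in the interior $\A_\infty$ and once in the boundary $\sqcup_{g'}\A_{g'}$---so one must keep track of which copy a given degeneration lands in and confirm, through the compatibility of Proposition \ref{compatible}, that the zero-dimensional cusps of all the $\overline{\A}_g^{SBB}$ are identified with a single point.
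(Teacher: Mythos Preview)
Your strategy differs from the paper's. The paper does not hunt for a common boundary point; it works entirely inside $\A_{g+1}$ and shows that the interior-embedded $J(\M_g)\subset\A_g\subset\A_{g+1}$ lies in the closure of $J(\M_{g+1})$. The argument is a plumbing construction: fix the genus-one surface $\Sigma_1$ with period $i$, and for any $\Sigma_g$ form a genus-$(g{+}1)$ surface $\Sigma_{g+1,\varepsilon}$ by excising small disks and gluing a neck of parameter $\varepsilon$. As $\varepsilon\to 0$ one has $J(\Sigma_{g+1,\varepsilon})\to J(\Sigma_g)\times E_i$, and the right-hand side is precisely the image of $J(\Sigma_g)$ under the interior embedding of Equation~\ref{embed-h}. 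Thus the $J(\M_g)$ form a \emph{nested} chain, which is stronger than mere connectedness and is exactly what the paper invokes in the subsequent stratification proposition and theorem.

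The obstacle you anticipate is not just bookkeeping; it is genuine, and the assertion that ``the cusp reached inside each $\overline{\A}_g^{SBB}$ is one and the same point of $\overline{\A}_\infty^{SBB}$'' is false as stated. Under the interior embedding of Equation~\ref{embed-h}, a fully degenerating $A\in\A_g$ produces $A\times E_i$ degenerating only in the first factor, so its limit in $\overline{\A}_{g+1}^{SBB}$ is the point $[E_i]$ in the boundary stratum $\A_1$, \emph{not} the cusp $\A_0$ of $\overline{\A}_{g+1}^{SBB}$. Hence the deepest cusps of the various $\overline{\A}_g^{SBB}$ are distinct points of $\overline{\A}_\infty^{SBB}$. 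Your scheme can be rescued---for instance, the image in $\overline{\A}_g^{SBB}$ of the cusp of $\overline{\A}_1^{SBB}$ is $[E_i^{\,g-1}]$ in the boundary $\A_{g-1}$, and this does lie in $J(\overline{\M}_g^{SBB})$ via the stratum $\M_1\times\cdots\times\M_1$---but verifying that is essentially the plumbing observation again. Likewise, your inductive step via pinching a nonseparating loop lands on the \emph{boundary} copy of $\M_g$ inside $\overline{\M}_{g+1}^{SBB}$, not on the \emph{interior-embedded} $J(\M_g)$ that defines $\overline{\M}_\infty^{SBB}$; identifying these two copies again requires the paper's construction.
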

\begin{proof}
We want to show that $J(\M_g)\subset \A_g$ is 
contained in the closure of $J(\M_{g+1})$ in $\A_{g+1}$.
Recall that $\A_g$ is embedded into $\A_{g+1}$ through the
embedding of $\h_g\hookrightarrow \h_{g+1}$ in Equation
\ref{embed-h}.
Suppose $\Sigma_1$ is the compact Riemann surface of genus $1$ whose period in $\h_1$ is equal to $i$ with respect to a suitable
choice of basis of $H_1(\Sigma_1, \Z)$.
For any compact Riemann surface $\Sigma_g$ of genus $g$,
$J(\Sigma_g)$ gives a point $p$ in $J(\M_g)\subset \A_g$.
Let $p$ also denote the image of $p$ in $\A_{g+1}$
under the above embedding $\A_g \to \A_{g+1}$.
Then the disjoint union $\Sigma_g \cup \Sigma_1$ is 
mapped to the point $p\in \A_{g+1}$.

Now if we pick two points on $\Sigma_g$ and $\Sigma_1$ 
and remove small disks around them depending on a small parameter
$\varepsilon$
  and glue them, we get a
compact Riemann surface $\Sigma_{g+1, \varepsilon}$
of genus $g+1$ with a short separating neck. 
Note that $J(\Sigma_{g+1, \varepsilon})$ is contained
in $J(\M_{g+1})$ in $\A_{g+1}$. When $\varepsilon\to 0$, $J(\Sigma_{g+1, \varepsilon})$ converges to
$J(\Sigma_g \cup \Sigma_1)$, which is the point $p$
in $\A_{g+1}$ above. 
It follows that $p$ is in the closure of $J(\M_{g+1})$,
and hence 
 every point of $J(\M_g)$ is a limit
of points of $J(\M_{g+1})$.

\end{proof}

\begin{rem}
{\em Adding a compact Riemann surface $\Sigma_1$  of
genus 1 to $\Sigma_g$ to obtain a compact Riemann surface
$\Sigma_{g+1, \varepsilon}$ is one natural way to relate
$\M_g$ to $\M_{g+1}$.
This was used in the formulation of stability results in 
\cite{har} and \cite{mw} on homology and cohomology of
$\M_g$, or mapping class groups.
}
\end{rem}

\begin{prop}
The subspace $\overline{\M}_\infty^{SBB} \subset \overline{\A}_\infty^{SBB}$  has a canonical stratification such that the closure
of each stratum is a projective variety over $\C$,
and is a union of $\overline{\M}_g^{SBB}$,  $g\geq 0$, 
though $\overline{\M}_g^{SBB}$ can appear in many different ways
in $\overline{\M}_\infty^{SBB}$.
\end{prop}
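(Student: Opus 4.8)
The plan is to transport the stratification of the ambient space $\overline{\A}_\infty^{SBB}$ produced in Proposition \ref{a-appearance} onto the closed subspace $\overline{\M}_\infty^{SBB}$, and then to refine and identify the resulting pieces as Jacobian loci. Recall that $\overline{\A}_\infty^{SBB}=\A_\infty\coprod(\sqcup_{g\geq 0}\A_g)$, where the interior $\A_\infty$ is filtered by $\A_1\subset\A_2\subset\cdots$ (via the embedding of Equation \ref{embed-h}) and the boundary contributes one further copy of each $\A_g$; as recorded in Remark \ref{closure-a}, each $\A_g$ therefore occurs both in the interior and in the boundary. First I would intersect $\overline{\M}_\infty^{SBB}$ with each of these copies of $\A_g$, declaring the intersections to be the provisional strata.

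The second step is to identify these intersections. Since $\overline{\M}_\infty^{SBB}=\cup_{g\geq 1}J(\overline{\M}_g^{SBB})$ and each Torelli image lies in $\overline{\A}_g^{SBB}=\A_g\sqcup\A_{g-1}\sqcup\cdots\sqcup\A_0$, I would separate the two mechanisms of degeneration described in Proposition \ref{mg-stratification}. Pinching a homologically nontrivial loop lowers the genus, sending the Jacobian into a boundary copy $\A_{g-1}$; pinching a homologically trivial loop keeps the abelian variety in the top-dimensional copy $\A_g$ but makes it decomposable, so that a boundary product $\M_{g_1}\times\cdots\times\M_{g_k}$ with $g_1+\cdots+g_k=g$ maps to $J(\M_{g_1})\times\cdots\times J(\M_{g_k})$. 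Consequently the intersection of $\overline{\M}_\infty^{SBB}$ with a fixed copy of $\A_g$ is the union, over all partitions $g=g_1+\cdots+g_k$, of the products $J(\M_{g_1})\times\cdots\times J(\M_{g_k})$. I would then take the canonical stratification to be the refinement indexed by these partitions together with the interior/boundary label, so that each stratum is a product $\M_{g_1}\times\cdots\times\M_{g_k}$, the single-part case recovering a copy of $\M_g$ itself.

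To establish the closure statement, I would proceed stratum by stratum. For the pure stratum $\M_g$, the closure of $J(\M_g)$ inside $\overline{\A}_g^{SBB}$ is $J(\overline{\M}_g^{SBB})$, which by the Torelli-type results recalled above in the discussion of the Torelli map is a projective variety over $\C$; Proposition \ref{compatible} guarantees that this closure is unchanged when taken inside all of $\overline{\A}_\infty^{SBB}$, so the closure of the stratum is indeed $\overline{\M}_g^{SBB}$. For a product stratum the closure is the image of the projective variety $\overline{\M}_{g_1}^{SBB}\times\cdots\times\overline{\M}_{g_k}^{SBB}$ under the product-of-Jacobians morphism, hence again projective as the continuous image of a projective variety. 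Running over all strata then exhibits $\overline{\M}_\infty^{SBB}$ as the union of the $\overline{\M}_g^{SBB}$ (and products thereof), with each $\overline{\M}_g^{SBB}$ occurring in the many distinct ways forced by the double appearance of $\A_g$ in Proposition \ref{a-appearance}.

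The hard part will be the bookkeeping at the decomposable locus. The assignment $(A_1,\dots,A_k)\mapsto A_1\times\cdots\times A_k$ gives a morphism $\A_{g_1}\times\cdots\times\A_{g_k}\to\A_{g_1+\cdots+g_k}$ that is only finite-to-one — it forgets the ordering of equal-dimensional factors and acquires the extra automorphisms of a reducible abelian variety — so it is not an embedding, and I must verify that it nonetheless carries $J(\M_{g_1})\times\cdots\times J(\M_{g_k})$ onto a locally closed piece with the expected projective closure and compatibly with the Satake topology. The crux is to show that the open locus $J(\M_g)$ of indecomposable Jacobians is disjoint from every decomposable locus coming from a proper partition, so that the strata of different partition type are genuinely distinct and the stratification is locally closed; this separation is precisely the geometric content of the boundary description in Proposition \ref{mg-stratification}, and is where the real work lies.
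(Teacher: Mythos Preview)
Your approach is considerably more elaborate than the paper's, and in one place rests on a misreading of the ambient geometry. The paper does not transport the stratification of $\overline{\A}_\infty^{SBB}$ at all. Instead it simply invokes the immediately preceding connectedness proposition, which shows $J(\M_g)$ lies in the closure of $J(\M_{g+1})$; combined with Proposition~\ref{mg-stratification} this gives a chain $\overline{\M}_1^{SBB}\subset\overline{\M}_2^{SBB}\subset\cdots$, and the strata are declared to be the successive differences $\overline{\M}_{g+1}^{SBB}\setminus\overline{\M}_g^{SBB}$. The closure of each stratum is then $\overline{\M}_{g+1}^{SBB}$ itself, already known to be projective from its very construction as the Zariski closure of $J(\M_{g+1})$ in $\overline{\A}_{g+1}^{SBB}$. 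That is the whole argument; no partition bookkeeping, no product-of-Jacobians morphism, no analysis of the decomposable locus is needed.

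Your route can be made to work, but note one genuine error in the last paragraph: you attribute the ``many different ways'' clause to the interior/boundary double appearance of $\A_g$ from Proposition~\ref{a-appearance}. The paper's very next theorem (and the remark following it) says explicitly that this double appearance does \emph{not} survive on the curve side: there is a \emph{unique} embedding $\M_g\hookrightarrow\overline{\M}_{g+1}^{SBB}$. The multiplicity of appearances of $\overline{\M}_{g'}^{SBB}$ inside $\overline{\M}_\infty^{SBB}$ comes instead from the product strata $\M_{g_1}\times\cdots\times\M_{g_k}$ with some $g_i=g'$, not from the interior/boundary dichotomy of the ambient space. So the ``hard part'' you flagged is indeed the correct locus to worry about in your scheme, but the reason you give for why multiple copies arise is the wrong one.
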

 \begin{proof}
 We note that each $\overline{\M}_g^{SBB}$ has a canonical stratification by Proposition \ref{mg-stratification}. 
 Since $\overline{\M}_\infty^{SBB}$ is the union of
 $\overline{\M}_g^{SBB}=J(\overline{\M}_g^{SBB})$  for $g \geq 0$, 
 the above Proposition shows that $\overline{\M}_g^{SBB}$
 is contained in $\overline{\M}_{g+1}^{SBB}$.
 Then by   considering
  $\overline{\M}_{g+1}^{SBB} -\overline{\M}_{g}^{SBB}$,
  we obtain a desired stratification.
 \end{proof}
 
 In comparison to Proposition \ref{a-appearance} about the decomposition of $\overline{\A}_\infty^{SBB}$, we have the following
  result.
  
  \begin{thm}
  For every $g$, there is only one way to embed
  $\M_g$ into $\overline{\M}_{g+1}^{SBB}$, which is also equal
  to the closure of $\M_{g+1}$ inside $\overline{\M}_\infty^{SBB}$.
  Under this inclusion, we get an increasing sequence of spaces:
  $$\overline{\M}_1^{SBB}\hookrightarrow \overline{\M}_2^{SBB}\hookrightarrow  \overline{\M}_3^{SBB}\hookrightarrow  
  \cdots,$$
  and 
  $$ \overline{\M}_\infty^{SBB}=\lim_{g\to \infty} \overline{\M}_g^{SBB}=\cup_{g\geq 1} \overline{\M}_g^{SBB}.$$
  
  \end{thm}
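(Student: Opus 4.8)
The plan is to split the statement into a structural part and a uniqueness part, and to dispose of the structural part first, since it follows from results already in hand, reserving the uniqueness for last as the real content. The structural part asserts that the $\overline{\M}_g^{SBB}$ form an increasing chain whose direct limit (equivalently, union) is $\overline{\M}_\infty^{SBB}$, and that each $\overline{\M}_{g+1}^{SBB}$ is the closure of $\M_{g+1}$ inside $\overline{\M}_\infty^{SBB}$. The uniqueness part, namely that $\M_g$ embeds into $\overline{\M}_{g+1}^{SBB}$ in only one way in contrast to the two embeddings of $\A_g$ into $\overline{\A}_{g+1}^{SBB}$ recorded in Remark \ref{closure-a}, is where the argument has to do work.

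For the chain I would invoke the stratum description of Proposition \ref{mg-stratification}: pinching a homologically nontrivial loop exhibits $\M_g$ as a boundary stratum of $\overline{\M}_{g+1}^{SBB}$, and the (unnumbered) proposition asserting that a boundary stratum $\M_{g'}$ has closure $\overline{\M}_{g'}^{SBB}$ identifies the closure of that stratum with $\overline{\M}_g^{SBB}$. This produces the closed inclusion $\overline{\M}_g^{SBB}\hookrightarrow\overline{\M}_{g+1}^{SBB}$ and hence the increasing sequence. For the closure identification I would use that each $\overline{\A}_g^{SBB}$ is closed in $\overline{\A}_\infty^{SBB}$ (which follows by induction from Proposition \ref{compatible}), so that the closure of $J(\M_{g+1})$ computed in $\overline{\A}_\infty^{SBB}$ agrees with the one computed in $\overline{\A}_{g+1}^{SBB}$, namely $J(\overline{\M}_{g+1}^{SBB})=\overline{\M}_{g+1}^{SBB}$; intersecting with $\overline{\M}_\infty^{SBB}$ gives the stated equality with the closure of $\M_{g+1}$. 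The identities $\overline{\M}_\infty^{SBB}=\varinjlim_g\overline{\M}_g^{SBB}=\cup_{g\geq 1}\overline{\M}_g^{SBB}$ then follow at once from the defining formula $\overline{\M}_\infty^{SBB}=\cup_g J(\overline{\M}_g^{SBB})$ together with the chain just produced.

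For uniqueness I would compare the two embeddings of $\A_g$ into $\overline{\A}_{g+1}^{SBB}$ after restricting to the Torelli locus. The boundary embedding of Equation \ref{embed-bdy} meets $\overline{\M}_{g+1}^{SBB}$ exactly in the nonseparating stratum $\M_g$ above. The interior embedding of Equation \ref{embed-h} carries $J(\Sigma_g)$ to the decomposable principally polarized abelian variety $J(\Sigma_g)\times E_i$, where $E_i$ is the elliptic curve of period $i$ forced by that embedding. The decisive point is that the Jacobian of a smooth curve is indecomposable, so such products never lie in $J(\M_{g+1})$ itself; by Proposition \ref{mg-stratification} they occur only on the boundary of the Torelli locus, sitting inside the separating stratum $\M_g\times\M_1$ as the slice where the genus-one factor is fixed to $E_i$. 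Thus the extra factor that furnished a genuine second copy of $\A_g$ in the abelian setting becomes, for curves, an honest additional modulus, and the interior embedding does not yield $\M_g$ as a stratum of $\overline{\M}_{g+1}^{SBB}$ at all. Enumerating the strata of Proposition \ref{mg-stratification}, the only one isomorphic to $\M_g$ is the nonseparating stratum (a product $\M_{g_1}\times\cdots\times\M_{g_k}$ with more than one factor of positive genus is never isomorphic to the single space $\M_g$, and genus-$0$ factors are invisible in the Satake-Baily-Borel picture), so the embedding is unique.

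I expect the main obstacle to be precisely this uniqueness step, and within it the need to show that the interior slice $\{J(\Sigma_g)\times E_i\}$ does not count as a second embedding. One must argue that inside $\overline{\M}_{g+1}^{SBB}$ this locus is not a stratum of the canonical stratification but a proper slice of $\M_g\times\M_1$, so that it fails to define an embedding of $\M_g$ in the structural sense, while its closure is absorbed into the chain already built. Making this rigorous uses the full force of Proposition \ref{mg-stratification} to control every boundary stratum, together with the indecomposability of Jacobians to separate the open locus $J(\M_{g+1})$ from the decomposable locus; this indecomposability is exactly the feature that breaks the interior/boundary symmetry present for $\A_g$ in Proposition \ref{a-appearance} and collapses the two embeddings to one.
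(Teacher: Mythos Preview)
The paper actually states this theorem without a formal proof block; the justification is implicit in the preceding propositions and is then elaborated informally in the two remarks that follow. Your proposal is correct and matches the ideas the paper has in mind. In particular, the second remark after the theorem makes exactly your point about the interior embedding: sending $J(\Sigma_g)$ to $J(\Sigma_g)\times E_i$ corresponds to adjoining a \emph{chosen} genus-$1$ curve, so this depends on a normalization and lands inside the larger stratum $\M_g\times\M_1$ rather than cutting out a canonical copy of $\M_g$. Your use of indecomposability of Jacobians to exclude such products from the open locus $J(\M_{g+1})$, together with the enumeration of boundary strata from Proposition~\ref{mg-stratification}, makes rigorous what the paper only sketches; the structural chain argument via Proposition~\ref{compatible} and the connectedness proposition is likewise what the paper intends.
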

  
 \begin{rem}
 {\em 
  Note that the unique embedding of $\M_g$ into the compactification $\overline{\M}_{g+1}^{SBB}$ 
  is in some sense a nicer property for this family of $\M_g$ than
  the family of $\A_g$ since there are two different
  embeddings of $\A_g$ into $\overline{\A}_{g+1}^{SBB}$,
  as pointed out in Proposition \ref{a-appearance}.
  }
  \end{rem}
 
 {\begin{rem}
{\em The relation between the inductive limit $\lim_{g\to \infty} \overline{\M}_g^{SBB}$ from the general construction
 and the space constructed in this paper  through 
$\overline{\A}_\infty^{SBB}$ can also be seen as follows. A Riemann
surface of genus $g$ on one hand is a degenerated Riemann surface $\Sigma$ of genus $g+1$ where a homologically nontrivial loop, i.e., a non-separating loop,  has been pinched to a point. Therefore, its Jacobian $J(\Sigma)$, an Abelian variety of dimension $g$,  is also a degenerated Abelian variety of dimension $g+1$. Alternatively, we can identify $J(\Sigma)$ with an Abelian variety of dimension $g+1$ by multiplying it with a normalized Abelian variety of dimension 1. This would correspond to viewing $\Sigma$ as a Riemann surface of genus $g+1$ by taking its disjoint union with a standard Riemann surface of genus 1. Of course, there is the issue of the choice of normalization here for that Riemann surface of genus 1. But the advantage of the construction is that we no longer need to go to the boundary of the moduli space for Riemann surfaces of genus $g+1$ or of principally polarized Abelian varieties of dimension $g+1$ to get the objects of genus/dimension $g$, but can stay within the interior. And we can interpolate between the two construction by degenerating the Riemann surfaces of genus 1 
that had been added as a factor/component. (We don't need to address the other way a Riemann surface of genus $g+1$ can be degenerate, by pinching a homologically trivial loop, because in that case, the genus and the dimension of the Jacobian do not drop, and therefore, the corresponding degeneration stays in the interior of the moduli space $\A_g$ anyway.)
}
\end{rem}}

\section{Riemannian metrics}\label{sec:met}
First, we note that each irreducible symmetric space has a
unique invariant Riemannian metric up to scaling.
On each $\h_g$, we can choose  the invariant  Riemannian metric such that under the canonical embedding 
$\H^2=\h_1\hookrightarrow \h_g$ as in (or induced inductively from) Equation \ref{embed-h}, the induced metric
on $\H^2$ is the Poincar\'e hyperbolic metric.

\begin{prop}
With the above normalization of invariant metrics on $\h_g$,
 for every $g\geq 1$,  the  embedding
$\h_g \hookrightarrow \h_{g+1}$ in Equation \ref{embed-h} is an isometric embedding,
and the infinite dimensional Siegel space $\h_\infty$ has an invariant Riemannian metric,
which induces the normalized  invariant Riemannian metric on each embedded interior subspace $\h_g$ in $\h_\infty$.
\end{prop}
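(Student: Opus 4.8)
The plan is to exhibit one uniform formula for the invariant metric valid on every $\h_g$ and to reduce both assertions to elementary block-matrix computations with it. The starting point is the classical $Sp(2g,\R)$-invariant metric on the Siegel upper half space, which in the coordinate $Z=X+iY$ reads $ds_g^2=\mathrm{tr}\!\left(Y^{-1}\,dZ\,Y^{-1}\,d\bar Z\right)$ with $dZ=dX+i\,dY$. Because each $\h_g=Sp(2g,\R)/U(g)$ is an irreducible Hermitian symmetric space, its invariant metric is unique up to a positive scalar (as recalled at the start of this section), so it suffices to check that this single formula both realizes the prescribed normalization and is compatible with the embeddings in Equation \ref{embed-h}.

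First I would pin down the normalization. Composing Equation \ref{embed-h} inductively sends $x+iy\in\h_1$ to the point of $\h_g$ with $X=\mathrm{diag}(x,0,\dots,0)$ and $Y=\mathrm{diag}(y,1,\dots,1)$. Along this copy of $\h_1$ only the top-left entry of $Z$ varies, so $dZ=\mathrm{diag}(dx+i\,dy,0,\dots,0)$ and $Y^{-1}=\mathrm{diag}(1/y,1,\dots,1)$; the trace then collapses $ds_g^2$ to $y^{-2}(dx^2+dy^2)$, the Poincar\'e metric. Hence $ds_g^2$ is precisely the normalized invariant metric on $\h_g$ for every $g$, and the whole family is governed by one formula.

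The isometry of Equation \ref{embed-h} follows from the same bookkeeping at a single level. If $Z=X+iY\in\h_g$ maps to $\tilde Z$ with $\tilde Y=\left(\begin{smallmatrix}Y&0\\0&1\end{smallmatrix}\right)$, the appended diagonal entry of $\tilde Z$ is the constant $i$, so its differential vanishes and $d\tilde Z=\left(\begin{smallmatrix}dZ&0\\0&0\end{smallmatrix}\right)$, while $\tilde Y^{-1}=\left(\begin{smallmatrix}Y^{-1}&0\\0&1\end{smallmatrix}\right)$. The product $\tilde Y^{-1}\,d\tilde Z\,\tilde Y^{-1}\,d\overline{\tilde Z}$ is block-diagonal with top-left block $Y^{-1}\,dZ\,Y^{-1}\,d\bar Z$ and all remaining entries zero, so its trace equals $\mathrm{tr}(Y^{-1}\,dZ\,Y^{-1}\,d\bar Z)$. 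Thus Equation \ref{embed-h} pulls $ds_{g+1}^2$ back to $ds_g^2$, giving the claimed isometric embedding.

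Finally, to metrize $\h_\infty$ I would use the explicit model in Equation \ref{h-infty}: at any point a tangent vector is finitely supported, lying in the image of some $\h_g$, so the trace formula is a finite sum and defines a Hermitian inner product on each tangent space. The compatibility just established makes this independent of the level $g$ chosen to represent the vector, so it descends to a well-defined metric on the direct limit whose restriction to each interior $\h_g$ is the normalized metric; invariance under $Sp(\infty,\R)$ is inherited from the finite-dimensional invariance of $ds_g^2$, since every element of $Sp(\infty,\R)$ arises from some $Sp(2g,\R)$. The main obstacle is not computational but the infinite-dimensional bookkeeping: one must confirm that the direct-limit tangent space at each point is genuinely the union of the finite-dimensional tangent spaces---so that every tangent vector is finitely supported and the trace is automatically finite---and that \emph{invariant Riemannian metric} is to be read in the weak sense appropriate to a manifold modeled on $\C^\infty$ with the finite topology, rather than as a complete Hilbert-manifold metric.
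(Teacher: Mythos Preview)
The paper states this proposition without proof; it is treated as an immediate consequence of the uniqueness (up to scale) of the invariant metric on an irreducible symmetric space together with the fact that the embedding in Equation~\ref{embed-h} is totally geodesic. Your argument is correct and supplies exactly the explicit verification the paper omits: writing down the classical Siegel metric $ds_g^2=\mathrm{tr}(Y^{-1}\,dZ\,Y^{-1}\,d\bar Z)$, checking the Poincar\'e normalization on the embedded $\h_1$, and doing the block computation to see that the appended $i$ in the $(g+1,g+1)$ slot contributes nothing to the pullback. This is the natural and standard route, and your handling of the direct-limit tangent space (finitely supported vectors, level-independence from the compatibility just proved, invariance inherited from the finite levels) is precisely what is needed to make sense of the ``invariant Riemannian metric'' on $\h_\infty$ in the weak sense appropriate to a $\C^\infty$-manifold. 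There is nothing to correct.
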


On the completion $\overline{\h_\infty}_{, \Q}$, we can put
on  a stratified Riemannian metric so that on each standard boundary component
$\h_g$ and hence every rational boundary component,
the induced metric is the above normalized invariant
metric on $\h_g$.

Though the boundary strata $\h_g$ are at infinite distance from interior points of $\overline{\h_\infty}_{, \Q}$,
i.e., points in $\h_\infty$ (for example, from the interior points 
contained in any interior subspace of $\h_{g'}$ of $\h_\infty$), 
 it is no problem
since these metrics on the boundary strata are compatible 
in tangential directions in the following sense.

Suppose $\h_g$ is a rational boundary component, i.e., 
contained in the boundary of $\overline{\h_\infty}_{, \Q}$. Then we have families of ``parallel"  subspaces
$\h_g$ inside $\h_\infty$ which converge to the boundary component $\h_g \subset \overline{\h_\infty}_{, \Q}$.
For example, for the standard boundary component $\h_g$
of $\overline{\h_\infty}_{, \Q}$, we can push the canonically embedded interior subspaces 
$ \h_g$ in $\h_\infty$  towards the boundary component.

If $v $ is a tangent vector to such an interior subspace $\h_g$, 
then it is also
a tangent vector to the boundary $\h_g$. 
An important point  is that the norms of $v$ are the same.

This means that we have a compatible stratified Riemannian metric on different stratification components of 
the completion $\overline{\h_\infty}_{, \Q}$, and hence also
on the completion $\overline{\A}_\infty^{SBB}$.

Now when we decompose $\overline{\A}_\infty^{SBB}$ into
the disjoint union 
\begin{equation}
\overline{\A}_\infty^{SBB}= \A_\infty\coprod \A_0 \sqcup \A_1 \sqcup \A_2 \cup \cdots \cup \A_g
\sqcup \cdots,
\end{equation}
we can use the Riemannian metric to define a measure on each of the boundary piece $\A_g$.
(Note that they are disjoint).

For the interior $\A_\infty$,  which is a {\em non-disjoint union}
 of $\A_0,  \cdots,  \A_g, \cdots$,
and for any finite dimensional analytic subspace $K$ in 
$\A_\infty$, we can use
the Riemannian metric of $\A_\infty$ or $\h_\infty$ to define a measure on $K$, or suppose $K$ is contained in some $\A_g$,
then we can use Riemannian measure of $\A_g$ and restrict
it to $K$.

The double appearance of $\A_g$ in $\overline{\A}_\infty^{SBB}$ above
in  might not be so nice.
On the other hand, this does not occur for $\overline{\M}_\infty^{SBB}$.

\vspace{.1in}
\noindent{\bf Stratified Riemannian metric on $\overline{\M}_\infty^{SBB}$} 
\vspace{.1in}

To construct a measure on $\overline{\M}_\infty^{SBB}$, we use the embedding
$$\overline{\M}_\infty^{SBB}  \subset \overline{\A}_\infty^{SBB}.$$

We can pull back the stratified Riemann metric on $\overline{\A}_\infty^{SBB}$ to $\overline{\M}_\infty^{SBB}$. Since, however, this is not an immersion, the pull back of the stratified Riemann metric of $\overline{A_\infty}$ is not everywhere positive definite on  $\overline{M_\infty}$ \cite{r1,r2}. We will address that issue in a moment and first investigate the properties of this pull-back metric.

To describe this metric, we use the following {\em disjoint decomposition}:

\begin{equation}
\overline{\M}_\infty^{SBB}=
 \sqcup_{g\geq 1} \M_g  \coprod_{k\geq 2,
g_1+ \cdots + g_k=g} \M_{g_1} \times \cdots \times \M_{g_k}.
\end{equation}

Note that in the above decomposition, we remove the distinguished boundary component $\M_{g-1}$ of  
$\overline{\M_g}^{SBB}$ and its boundary components in Proposition \ref{mg-stratification}
in order to avoid repeatation, and group other components together
with $\M_g$.

This distinguished boundary component $\M_{g-1}$
of $\overline{\M_g}^{SBB}$ is also  at
infinite distance from the interior points of $\M_g$ and hence of
$\overline{\M_g}^{SBB}$. (There are other boundary components
at infinite distance from the interior points which result from 
pinching homologically nontrivial loops. They appear in the boundary of 
$\M_{g-1}$).
On the other hand, the Riemannian metric on the boundary
component $\M_{g-1}$ and the Riemannian metric on the interior
of $\overline{\M_g}^{SBB}$ 
are compatible in a similar sense as described above, i.e.,
when interior points  of $\overline{\M_g}^{SBB}$ converge to
a point in $\M_{g-1}$, the norms of tangential vectors of $\M_{g-1}$ converge.
Therefore,  we can take the corresponding measures on all these strata, in particular, $\M_g$, to get a 
compatible stratified measure on $\overline{\M_g}^{SBB}$.

We now address the issue of the non-positive definiteness, following \cite{hj1}. The  Jacobian map $J$ that we have used for the embedding $\M_g \to \A_g$ 
associates to each marked Riemann surface  $\Sigma$ its Jacobian $J(\Sigma)$, a principally
polarized Abelian variety. In particular, $J(\Sigma)$
has  canonical flat metric.

Instead of
using the  Jacobian  map in order to map $\M_g$ to $\A_g$, we may use a related period map, which is a map 
from each individual Riemann surface $\Sigma$ to $J(\Sigma)$, 
$$p_\Sigma:  \Sigma \to J(\Sigma),$$
in order to obtain a metric on
$\Sigma$ by pulling back the flat metric of $J(\Sigma)$.
This metric on $\Sigma$ which is called the {\em Bergman metric} can
also be described as follows. Let $\theta_1,\dots ,\theta_g$ be an
$L^2$-orthonormal basis of the space of holomorphic 1-forms on $\Sigma$, i.e.
\bel{met1}{\sqrt{-1}\over 2}
  \int_\Sigma\theta_i\land\bar\theta_j=\delta_{ij}.
\qe
Note that since the sum of the squares of an orthonormal basis of holomorphic one-forms (orthonormality is conformally invariant for one-forms), \rf{met1} does not depend on the prior choice of a metric on $\Sigma$.

The Bergman metric then is simply given by
$$\rho_B(z)dzd\bar z:=\sum_{i=1}^g\theta_i\bar\theta _i.$$
We can form the $L^2$-product of holomorphic quadratic differentials
\bel{met3}
(\omega_1 dz^2,\omega_2dz^2)_{B}:=
  {\sqrt{-1}\over2}\int_\Sigma\omega_1(z)\overline{\omega_2(z)}
  {1\over\rho_B(z)}dz\land d\bar z.
\qe
This induces a  Riemannian metric on the Teichm\"uller space $\mathcal T_g$ and therefore also on its quotient $\M_g$ (or, more precisely, on a finite cover of $\M_g$  that does not possess quotient singularities). (Note that $\mathcal T_g$ is a complex
manifold  and is the universal covering of $\M_g$ as an orbifold.)
 For simplicity, we shall also call this metric on $\M_g$ the Bergman metric. As shown in \cite{hj1}, this metric dominates the Siegel metric just described, i.e., the metric using the
 map $J: \M_g \to \A_g$. In order to understand this result, one should note that both metrics are induced by the 
Jacobian map that associates to each Riemann surface its Jacobian. For the metric $(.,.)_B$, we use the period map on each  individual Riemann surface $\Sigma$, and pull back the flat metric on its Jacobian $J(\Sigma)$, and then form an $L^2$-product, analogous to the construction of the Weil-Petersson metric that works with the hyperbolic instead of the Bergman metric on $\Sigma$. For the Siegel metric, in contrast, we do not pull back the flat metrics on the individual Jacobians, but rather the symmetric metric on the moduli space $\A_g$ of principally polarized Abelian varieties. In contrast to the Siegel metric, $(.,.)_B$ is positive definite everywhere. It has the same asymptotic behavior as the former, however. We summarize our considerations in the following
\begin{thm} There exists a  metric on  $\overline{\M_g}^{SBB}$ that induces a Riemannian metric on each stratum $\M_g$. This metric has the following properties
\begin{itemize}
\item Boundary points of $\M_g$ corresponding to pinching a homologically
nontrivial loop are at infinite distance from the interior.
\item
Boundary points of $\M_g$ corresponding to pinching a homologically
(but not homotopically) trivial loop have finite distance from the
interior.
\item
The metric forgets the punctures, i.e.\  the boundary components
have codimension 3, again with the exception of the one where
one component of the limit Riemann surface has genus 1.
\end{itemize}
\end{thm}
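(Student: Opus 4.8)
The plan is to take as the metric the positive-definite \emph{Bergman metric} $(\cdot,\cdot)_B$ constructed above rather than the (possibly degenerate) Siegel metric pulled back from $\overline{\A}_g^{SBB}$ via the Jacobian map $J$. On Teichm\"uller space $\mathcal T_g$ the $L^2$-pairing \rf{met3} of holomorphic quadratic differentials against the Bergman density $\rho_B$ is manifestly positive definite, and it descends to a finite cover of $\M_g$; by the stratum-by-stratum compatibility of the metrics described above it restricts on each boundary stratum $\M_{g_1}\times\cdots\times\M_{g_k}$ to a product of Bergman metrics, hence gives a genuine Riemannian metric on every stratum. Because $(\cdot,\cdot)_B$ has the same boundary asymptotics as the Siegel metric (the comparison of \cite{hj1}), I would read off the distance statements from the simpler Siegel metric while retaining positive-definiteness from the Bergman metric.

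For the two distance assertions I would track the Jacobian under the two kinds of degeneration. Pinching a homologically nontrivial (non-separating) loop drops the genus to $g-1$: one period of $J(\Sigma)$ runs off to $i\infty$, so the image approaches the rational boundary component $\A_{g-1}\subset\overline{\A}_g^{SBB}$. In the invariant metric on $\h_g$ this boundary is at infinite distance --- in the degenerating direction the metric behaves like the Poincar\'e metric $y^{-2}(dx^2+dy^2)$ with $y\to\infty$, and $\int^\infty dy/y=\infty$ --- so by the matching asymptotics the Bergman distance to such a point is also infinite. Pinching a homologically trivial (separating) loop instead preserves the genus: $J(\Sigma)$ converges to the decomposable principally polarized Abelian variety $J(\Sigma_{g_1})\times J(\Sigma_{g_2})$ with $g_1+g_2=g$, which is an \emph{interior} point of $\A_g$ (its dimension does not drop), not a point of $\overline{\A}_g^{SBB}\setminus\A_g$. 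Since the Torelli map extends to the compactifications and this limit lies in the smooth locus of the metric on $\A_g$, a path realizing the degeneration has bounded speed and therefore finite length; hence such a boundary point is at finite distance.

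For the last property the key observation is that the Jacobian of the nodal limit $\Sigma_{g_1}\cup\Sigma_{g_2}$ equals $J(\Sigma_{g_1})\times J(\Sigma_{g_2})$ \emph{independently of where the node sits} on either component; as $\overline{\M}_g^{SBB}$ is defined through the Jacobian embedding into $\overline{\A}_g^{SBB}$, the node (puncture) positions are collapsed --- the metric forgets the punctures. Thus, whereas the Deligne-Mumford boundary divisor is $\M_{g_1,1}\times\M_{g_2,1}$ of codimension $1$, the Satake-Baily-Borel stratum is obtained by forgetting both marked points, giving $\M_{g_1}\times\M_{g_2}$ (Proposition \ref{mg-stratification}). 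When $g_1,g_2\geq 2$ this forgets two further complex parameters, so $\dim(\M_{g_1}\times\M_{g_2})=(3g_1-3)+(3g_2-3)=3g-6$ and the codimension in $\M_g$ (of dimension $3g-3$) is $3$. The sole exception is when a component has genus $1$: a genus-$1$ surface has a transitive (translation) automorphism group, so the marked-point position on it is absorbed by automorphisms and forgetting it drops no dimension, whence $\dim(\M_1\times\M_{g-1})=1+(3g-6)=3g-5$ and the codimension is $2$.

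The hard part will be the boundary asymptotics underlying the distance dichotomy, and in particular verifying that the positive-definite Bergman metric grows at the same rate as the Siegel metric, so that the finite/infinite alternative for the latter transfers to the former. This requires a careful degeneration analysis of the $L^2$-orthonormal holomorphic one-forms of \rf{met1} and of the quadratic-differential pairing \rf{met3} as the pinching parameter tends to $0$ --- showing that a non-separating pinch forces the relevant norm to blow up at precisely the rate giving infinite length, while a separating pinch keeps the induced lengths integrable --- which is the technical content behind the comparison with the Siegel metric carried out in \cite{hj1}.
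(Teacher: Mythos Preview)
Your proposal is correct and follows essentially the same approach as the paper: take the Bergman metric $(\cdot,\cdot)_B$, invoke the comparison with the Siegel metric from \cite{hj1} to read off the infinite/finite distance dichotomy according to whether the pinched loop is non-separating or separating, and compute the codimension by forgetting the node positions. The paper in fact states the theorem as a summary of the preceding discussion rather than giving a separate proof, so your write-up supplies precisely the details (the Poincar\'e-type divergence in the non-separating direction, the interior-of-$\A_g$ argument for the separating case, and the explicit dimension count) that the paper leaves implicit.
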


\begin{cor}
There exists a measure on $\overline{\M_g}^{SBB}$ that induces a finite volume measure on each stratum $\M_g$. 
\end{cor}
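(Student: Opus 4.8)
The plan is to integrate the Riemannian volume density of the metric furnished by the preceding theorem and to control its behavior near each boundary stratum of $\overline{\M_g}^{SBB}$. Since that metric is positive definite on every stratum, it determines a smooth volume form there; on the top stratum $\M_g$ this is the volume form of the Bergman metric, and on a lower stratum $\M_{g_1}\times\cdots\times\M_{g_k}$ it is the product of the corresponding volume forms. Because the compatibility of the metric in tangential directions, established before the theorem, makes these stratified densities fit together, the total measure will be finite as soon as every stratum has finite volume; by the product structure this reduces to showing that each individual moduli space $\M_{g'}$, $g'\le g$, carries finite volume. I would prove this by induction on $g$, the base case $g=1$ being the classical fact that $\M_1\cong SL(2,\Z)\backslash\h_1$ has finite hyperbolic volume. (As in the theorem one may pass to a finite cover to remove orbifold singularities, which changes the volume only by the degree and so does not affect finiteness.)

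For the inductive step I would use that $\overline{\M_g}^{SBB}$ is compact, being a projective variety, so that finiteness of the volume of $\M_g$ is a purely local question near the boundary: away from a neighborhood of $\partial\,\overline{\M_g}^{SBB}$ the space is precompact and the volume form is bounded. The boundary then splits, exactly as in the theorem, into the finite-distance part coming from pinching homologically trivial (separating) loops and the infinite-distance part coming from pinching homologically nontrivial (nonseparating) loops. Near the finite-distance strata the metric extends across the boundary, and since these strata have codimension at least two, a neighborhood of them has finite volume with no further work.

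The real content is the infinite-distance part, where the metric degenerates like the Siegel metric pulled back from $\A_g$. Here I would exploit the asymptotic product structure of the degeneration: pinching a nonseparating loop sends $\Sigma_g$ to $\Sigma_{g-1}$, and in period coordinates this is the regime in which the last diagonal entry $\tau=x+iy$ of the period matrix has $y\to\infty$ while the remaining block approaches the period matrix of $\Sigma_{g-1}$. Thus a cusp neighborhood looks, up to the action of the unipotent stabilizer, like $\M_{g-1}$ times a model cusp in the variable $\tau$, and the Bergman volume form factors accordingly. By the asymptotic comparison of the Bergman and Siegel metrics in \cite{hj1}, this volume form decays in the $\tau$-direction at the same rate as the invariant volume form of $\h_g$, which is precisely the decay making the Siegel volume of $\A_g$ finite. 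Integrating, the cusp contribution is the volume of $\M_{g-1}$ times a convergent one-dimensional integral, finite by the inductive hypothesis, which closes the induction.

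The step I expect to be the main obstacle is making this asymptotic product decomposition near the infinite-distance boundary precise and showing that the Bergman volume form is genuinely integrable there, uniformly over the $\M_{g-1}$-directions. For $g\le 3$ the difficulty disappears, since the Torelli map is dominant and the volume of $\M_g$ is dominated by the finite Siegel volume of $\A_g$; but for $g\ge 4$ the Jacobian locus is a proper subvariety of $\A_g$, so one cannot simply invoke finiteness of the volume of $\A_g$, and the integrability must instead be read off from the precise asymptotics of the metric of \cite{hj1} along the degeneration. Controlling these asymptotics uniformly, together with the contributions of the several nonseparating boundary strata that meet along the deeper strata, is where the work lies.
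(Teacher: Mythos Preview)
Your argument is sound in outline, but it is far more elaborate than what the paper does. The paper's proof is essentially two sentences: both the Siegel metric (the pullback of the invariant metric on $\A_g$ via $J$) and the Bergman metric induce measures on the strata; the Siegel metric is degenerate only along the hyperelliptic locus, which is a lower-dimensional quasiprojective subvariety and therefore does not affect the measure, and the Bergman measure dominates the Siegel one. Finiteness of the volume is not argued in the corollary itself; it is taken from the surrounding discussion, where it is attributed to the fact that $\A_g$ has finite invariant volume and that the induced metric on $\M_g$ has Poincar\'e-type behavior near the boundary, together with the asymptotic results of \cite{hj1}.

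Your inductive scheme, by contrast, attempts to \emph{prove} finiteness directly by localizing near each boundary stratum, separating the finite-distance (separating) and infinite-distance (nonseparating) degenerations, and reducing the latter to a one-dimensional cusp integral times the volume of $\M_{g-1}$. This is a genuinely different route: the paper leans on the embedding into $\A_g$ and offloads the analysis to \cite{hj1}, whereas you work intrinsically on $\M_g$ via degeneration asymptotics. You also put your finger on a point the paper glosses over, namely that for $g\ge 4$ the image $J(\M_g)$ is a proper subvariety of $\A_g$, so finite volume of $\A_g$ does not by itself bound the induced $(3g-3)$-dimensional volume of $\M_g$; the paper's proof simply does not engage with this. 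One small correction to your write-up: at the finite-distance strata you assert that ``the metric extends across the boundary,'' but the theorem only gives finite distance, not smooth extension. What you actually need is integrability of the volume form there, which follows from the codimension statement in the theorem together with the explicit asymptotics in \cite{hj1}; you should phrase it that way rather than claim an extension.
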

\proof
Both the Siegel metric and the Bergman metric induce measures on the strata of 
$\overline{\M_g}^{SBB}$. Even though the Siegel metric is not positive definite, it is degenerate only on the hyperelliptic locus because that is where the Jacobian map $J: \M_g\to \A_g$
 is not an immersion as orbifolds, but this hyperelliptic locus is a quasiprojective subvariety of lower dimension. Thus, we find not only one, but two measures satisfying the claim. The measure induced by the Bergmann dominates that induced by the Siegel metric. 
\endproof \qed

Thus, for the purposes of integration theory in Section \ref{sec:int},  we could use either one. In order to
get convergence of integrals on $\overline{\M}_\infty^{SBB}$, we shall have to choose  weights
for these  measures on  the components $\M_g$ depending on $g$. 
\begin{rem}
{\em
When we shall combine measures on subspaces of a common ambient
space to define a global measure in Section \ref{sec:int} below, one issue is the compatibility.
The above explanation shows that the canonical Riemannian
metric on $\h_\infty$ and $\overline{\h_\infty}_{, \Q}$ serves as a gauge to coordinate different components $\M_g$. 
When the dimension of the strata jumps, it does not affect
distance functions much,  but it has a big impact on measures
by noticing that a subspace of smaller dimension usually 
has zero measure with respect to an absolutely continuous measure on an ambient space such as Riemannian measures. 
Therefore, measures on different subapaces
need to be  adjusted according to their dimensions. This is what we shall now turn to.
 }
 \end{rem}

{\section{Integration on the universal moduli space of Riemann surfaces}\label{sec:int}

The considerations in this section will apply to both the inductive limit of the $\overline{\M}_g^{SBB}$ for $g\to \infty$ and the space $\overline{\M}_\infty\subset \overline{\A}_\infty^{SBB}$ constructed in this paper. They will not directly apply to $\overline{\A}_\infty^{SBB}$, because that space is not a disjoint union of strata corresponding to the different values of $g$.

We want to construct a measure on our space with respect to which every stratum has finite volume. This measure will be inductively built from measures on the spaces $\overline{\M}_g^{SBB}$. First of all, we note that $\A_g$ is a finite volume quotient of the Siegel upper half space with its Riemann-Lebesgue measure induced by its natural Riemannian metric induced by the symmetric structure. This then induces a measure $\mu_g$ on $\M_g$. That latter measure also has finite volume, essentially for the same reason that the Poincar\'e metric on the punctured disk has locally finite volume near the puncture.}

{The subsequent constructions will work for both the Siegel and the Bergman metric as explained in Section \ref{sec:met}.}

{We should recall  that the different boundary components  of $\M_g$ behave differently with respect to our metric, be it the Siegel or the Bergman metric. 
The metric on $\M_g$ induced from the invariant metric on $\A_g$
under the Jacobian map $J$ is not of Poincar\'e type
near a boundary component $\M_i\times \M_{g-i}$, since the induced metric here is not complete near such boundary points.
For the boundary component $\M_{g-1}$ lying on the boundary of
$\A_g$, it is complete. Nevertheless, our construction of measures will work in either situation, although the behavior of the measure will be different according to the type of boundary component.}

{ We can then build the measure
\be
\mu=\sum_g \lambda_g \mu_g
\qe
with positive real numbers $\lambda_g$ to be chosen. This means that
\be
\mu(A)=\sum_g \lambda_g \mu_g(A\cap \M_g)
\qe
for every measurable subset $A$ of our space, where again measurability requires measurability of the intersection with every stratum.}

{Again, we should point out that 
the space $\overline{\M}_\infty^{SBB}$ is not simply a disjoint union of the different $\M_g$.
The stratification is a bit more complicated. For example,
$\M_i\times \M_{g-i}$, $1\leq i \leq g-1$,  and products of more factors appear also
in the boundary of $\overline{M}_g^{SBB}$. Nevertheless, this does not affect our construction. 
}

{ (The principle of the construction can easily be seen by considering the closed unit interval $[0,1]$ and equip it with the $\lambda_1$ times the Lebesgue measure on the open interval $(0,1)$ plus $\lambda_0$ times the Dirac measures at the boundary points $0$ and $1$.)

The question then arises how to determine the $\lambda_g$. For the convergence of series as they occur in string theory, the $\lambda_g$ should be sufficiently rapidly decaying functions of $g$. In string theory, this is achieved as follows. Let $\Sigma$ be a Riemann surface of genus $g$, and $h:\Sigma\to \R^N$ be a Sobolev function. Then its Dirichlet integral \rf{intro1}, also called the Polyakov action (see e.g. \cite{pol,j2}) in string theory, is
\bel{int3} 
S(h,\Sigma)=\int_\Sigma |dh|^2
\qe
where by conformal invariance, we do not need to specify a conformal metric on $\Sigma$, as explained above. The string action then is defined as
\be
S_{string}(h,\Sigma):= S(h,\Sigma) + \alpha (2g-2)
\qe
for some positive constant $\alpha$ that needs to be determined by considerations from physics which are not relevant for our current purposes (see \cite{dhp,pol}.

\begin{thm}
  For $N=26$,  the string partition function can be written as
\be
\sum_{g\ge 0} \int \exp(S_{string}(h,\Sigma)dh d\Sigma
\qe
with a functional integration which for the component $\Sigma$ is carried out over $\M_g$. 
\end{thm}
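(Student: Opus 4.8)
The plan is to realize the right-hand side as a gauge-fixed Polyakov path integral and to show that, genus by genus, the functional integration over metrics collapses to an integration over $\M_g$ against the measure constructed in Section~\ref{sec:met}. I would start from the covariant formulation in which the genus-$g$ contribution is
\be
Z_g=\int \mathcal{D}\gamma\, \mathcal{D}h\, \exp(S(h,\Sigma)),
\qe
the integral running over all maps $h:\Sigma\to\R^N$ and all Riemannian metrics $\gamma$ on a fixed genus-$g$ surface $\Sigma$, with $S(h,\Sigma)$ the Dirichlet (Polyakov) action \rf{int3}. The first step is to decompose the space of metrics into the orbits of the combined action of diffeomorphisms and Weyl rescalings, together with the finite-dimensional transverse family of conformal structures. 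By the definition of $\M_g$, the quotient of the space of metrics by diffeomorphisms and Weyl rescalings is precisely (a cover of) the moduli space $\M_g$, so formally the metric integral factorizes as an infinite-dimensional gauge volume times a finite-dimensional integral over $\M_g$.

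Second, I would implement this split by the Faddeev--Popov procedure: changing variables from $\gamma$ to a diffeomorphism, a Weyl factor, and a point of moduli space produces a Jacobian, the Faddeev--Popov determinant, represented by a reparametrization $bc$-ghost system. Integrating over the diffeomorphism orbit cancels the (infinite) volume of $\mathrm{Diff}$, and the ghost determinant together with the Gaussian integral over the $N$ free bosons $h^\mu$ yields a volume form on $\M_g$. One should then recognize this volume form as, up to normalization, the Weil--Petersson / Siegel measure, i.e.\ exactly the measure $\mu_g$ of Section~\ref{sec:met}; the finiteness of each stratum integral is then the content of the Corollary there. The constant $\alpha(2g-2)$ in $S_{string}$ contributes a factor $\exp(\alpha(2g-2))$ that is $g$-dependent but $\Sigma$-independent, and this is absorbed into the weight $\lambda_g$ in the construction $\mu=\sum_g\lambda_g\mu_g$.

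The crux of the argument --- and the single step that forces $N=26$ --- is the Weyl anomaly. Neither $\mathcal{D}h$ nor the ghost measure is separately invariant under a Weyl rescaling $\gamma\mapsto e^{2\sigma}\gamma$; each transforms by a Liouville-type functional whose coefficient is the relevant central charge. The $N$ bosons contribute central charge $N$, while the $bc$ ghosts contribute $-26$. I would compute these coefficients by the standard heat-kernel (zeta-regularized) evaluation of the associated Laplacian determinants together with the Polyakov formula for their conformal variation. The Weyl factor $\sigma$ decouples exactly when the total central charge $N-26$ vanishes, i.e.\ at $N=26$; only then is the factorization of the first paragraph legitimate, the Liouville mode contributing a decoupled volume rather than a genuine Liouville weight. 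Establishing this cancellation rigorously is the main obstacle, since it requires controlling the regularized determinants and their behavior near the boundary strata of $\overline{\M}_g^{SBB}$, where the surface degenerates; here the finite-distance versus infinite-distance dichotomy and the codimension-$3$ statement of the Theorem in Section~\ref{sec:met} determine whether the genus-$g$ integral converges. Finally, summing the resulting well-defined genus-$g$ integrals over $g\ge 0$, with the weights $\lambda_g$ chosen to decay rapidly enough for convergence as discussed in Section~\ref{sec:int}, produces the stated expression for the partition function.
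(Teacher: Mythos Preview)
Your proposal follows the same conceptual route as the paper --- the key point is the cancellation of the conformal (Weyl) anomaly at $N=26$, so that the metric integral collapses to an integral over $\M_g$ --- but you supply far more detail than the paper does. The paper's proof is essentially a citation: it asserts that for $N=26$ the conformal anomalies cancel (referring to \cite{pol,j2}), that consequently the genus-$g$ partition function factors as an integral over $\M_g$ and an integral over $h$ with a finite value for each $g$, and that the term $\alpha(2g-2)$ in $S_{string}$ then handles the sum over $g$. Your Faddeev--Popov sketch, the $bc$-ghost central charge $-26$, and the heat-kernel computation of the Weyl variation are exactly the contents of those references, so in substance you are reconstructing what the paper outsources.

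One point in your write-up goes beyond what the theorem actually claims and is not quite right: you assert that the volume form produced by the ghost and matter determinants is ``up to normalization, the Weil--Petersson / Siegel measure, i.e.\ exactly the measure $\mu_g$ of Section~\ref{sec:met}.'' Neither the theorem nor the paper's proof makes that identification, and in fact the Polyakov (Mumford) measure on $\M_g$ involves the determinant line bundles $\det\bar\partial$ and is not the Siegel or Bergman measure in general. The theorem only asserts that the $\Sigma$-integration is over $\M_g$; it does not pin down $d\Sigma$ as $\mu_g$. So that step is an overreach, though it does not affect the validity of the rest of your outline.
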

\proof
For $N=26$,  the conformal anomalies cancel (see e.g. \cite{pol,j2}), and consequently, for each genus $g$, the partition function can be written as an integral over the moduli space of Riemann surfaces of genus $g$  and an integral over the field $h$, and for each $g$, the value of this integral is finite. The term $\alpha (2g-2)$ in \rf{int3} then ensures the convergence of the resulting series. \endproof

As we have just explained , measures can be
constructed on both $\lim_{g\to \infty} \overline{\M_g}^{SBB}$
via the general construction of direct limit and as a subspace
of the infinite dimensional locally symmetric space
$\A_\infty$ and its completion $\overline{\A}_\infty^{SBB}$.

We shall now explain that the additional structure coming from $\overline{\A}_\infty^{SBB}$ and the embedding
$J: \overline{\M}_\infty^{SBB}\to \overline{\A}_\infty^{SBB}$
will shed some light on how natural the above construction is.

First, we point out that infinite dimensional symmetric spaces
based on Hilbert spaces have natural Riemannian metrics,
and their submanifolds also have induced Riemannian metrics.
Unlike the case of finite dimensional cases,
Riemannian metrics on infinite dimensional manifolds do not automatically induce measure
and integration theory. Integration on infinite dimensional
manifolds based on Hilbert and Banach spaces seems  complicated. See  \cite{ku} \cite{wei}, for example, for more information and references.

On the other hand, spaces in this paper such as $\h_\infty$,
$\overline{\A}_\infty^{SBB}$ and $\overline{\M}_\infty^{SBB}$
have filtrations and stratifications by finite dimensional 
submanifolds, and we are only interested in finite dimensional
subspaces at each step in some sense,
and the general idea of analysis on stratified
spaces will help. One important point is to use
an invariant metric on the infinite dimensional
symmetric space $\h_\infty$ and its completion
$\overline{\h_\infty}_{, \Q}$ to coordinate metrics (see the
previous section)
and hence 
measures on these different strata.
See the book \cite{pf} for some related information and references. 

This is precisely what we have achieved here, as we have constructed  a measure on $\overline{\A}_\infty^{SBB}$ and $\overline{\M}_\infty^{SBB}$ by using a stratified 
Riemann metric on $\overline{\h_\infty}_{, \Q}$.

\section{Infinite dimensional locally symmetric spaces and stable cohomology of arithmetic groups}

The Siegel upper half plane $\h_g$ is one important generalization
of the Poincar\'e upper half plane $\H^2=SL(2, \R)/SO(2)$.

Another important generalization is 
$X_n=SL(\infty, \R)/SO(n)$, $n\geq 2$.
The arithmetic subgroup $SL(n, \Z)$ acts properly on $X_n$,
and the quotient $SL(n, \Z)\backslash X_n$ is the moduli space of
positive definite $n\times n$-matrices 
(or quadratic forms in $n$ variables) of determinant 1,
and which is also 
 the moduli space of flat tori of volume 1 in dimension $n$.

Clearly there is an embedding 

$$X_n\hookrightarrow X_{n+1}, \quad A\mapsto 
\begin{pmatrix} A & 0 \\ 0 & 1\end{pmatrix},$$
and an embedding
$$ SL(n, \R) \hookrightarrow SL(n+1, \R), \quad
\quad A\mapsto 
\begin{pmatrix} A & 0 \\ 0 & 1\end{pmatrix}.$$

Then the procedure in \S 2 goes through,
and we can construct
the infinite dimensional symmetric space 
$X_\infty=\lim_{n\to \infty} X_n$, the infinite dimensional
Lie group $SL(\infty, \R)=\lim_{n\to \infty} SL(n, \R)$,
and its arithmetic subgroup $SL(\infty, \Z)= \lim_{n\to \infty} SL(n, \Z)$.

These spaces and groups can be realized concretely as follows:
$$X_\infty=\{ \begin{pmatrix} A & 0\\ 0 & I_\infty\end{pmatrix}
 \mid A
\text{\ is a positive definite $n\times n$-matrix for some $n\geq 1$}, \det A=1 \},$$

$$SL(\infty, \R)=\begin{pmatrix} A & 0\\ 0 & I_\infty\end{pmatrix}
 \mid A \in SL(n, \R) 
\text{\  for some $n\geq 1$} \},$$

and 
$$SL(\infty, \Z)=\begin{pmatrix} A & 0\\ 0 & I_\infty\end{pmatrix}
 \mid A \in SL(n, \Z) 
\text{\  for some $n\geq 1$} \}.$$

The quotient $$SL(\infty, \Z)\backslash X_\infty$$ is an infinite
dimensional locally symmetric space which contains
every $SL(n, \Z)\backslash X_n$.

It is known that each $SL(n, \Z)\backslash X_n$ has a minimal
Satake compactification
$$\overline{SL(n, \Z)\backslash X_n}^S=
SL(n, \Z)\backslash X_n\sqcup SL(n-1, \Z)\backslash X_{n-1}
\sqcup \cdots \sqcup \{ \infty \},$$
which can be obtained from the quotient by an $SL(n, \Z)$-action
on a completion $\overline{X_n}_{,\Q}$ by adding rational boundary components of $X_n$.

For every $n$, the embedding $X_n\hookrightarrow X_{n+1}$ induces an embedding
$$\overline{SL(n, \Z)\backslash X_n}^S \hookrightarrow 
\overline{SL(n+1, \Z)\backslash X_{n+1}}^S.$$

Similarly, we can construct a completion $\overline{X_\infty}_{,\Q}$
of $X_\infty$
which is invariant under $SL(\infty, \Z)$ and hence
a completion $\overline{SL(\infty, \Z)\backslash X_\infty}^S$
of $SL(\infty, \Z)\backslash X_\infty$
such that for every $n$,
$$\overline{SL(n, \Z)\backslash X_n}^S
\hookrightarrow \overline{SL(\infty, \Z)\backslash X_\infty}^S,$$
and
$$\overline{SL(\infty, \Z)\backslash X_\infty}^S=\cup_{n\geq 1}
\overline{SL(n, \Z)\backslash X_n}^S=\lim_{n\to\infty}
\overline{SL(n, \Z)\backslash X_n}^S.$$

It is reasonable to expect that this same construction works for
other series of classical simple algebraic groups.

These spaces should be related to the stability of cohomology
of arithmetic groups in \cite{bo}. In fact, the analogy of the inclusion
$X_n\hookrightarrow X_{n+1}$ was used in \cite{bo} to
formulate and prove the stability result there.

\section{Some other reasons for constructing universal
moduli spaces}

The stability results for homology groups of the mapping class groups of surfaces, or equivalently, the homology groups
of $\M_{g,n}$, motivated Mumford conjecture
on stable cohomology of $\M_{g,n}$ \cite{mum} \cite{mw}.
The proof depends on realizing them as the cohomology
groups of  universal classifying  spaces. See \cite{wa} for a summary. These classifying spaces are topological objects
and well-defined up to homotopy equivalence.

We note that $\M_g$ is a classifying space for the mapping
class group of a compact surface $S_g$ of genus $g$
 for rational coefficients. One natural question is to {\em construct
 a universal moduli space which is an infinite dimensional
 algebraic variety and enjoys  some good
 algebraic geometry properties  and whose cohomology groups realize
 the stable cohomology groups as conjectured by Mumford.}
 We don't  know if our space $\M_\infty$ and its completion
 $\overline{\M}_\infty^{SBB}$ might be helpful to this purpose.

 We also note that for classical families of compact Lie groups,
 their limits and limits of their classifying spaces are univresal
 groups and universal spaces, and they 
 are important in characteristic classes. See \cite[\S 5]{mi}.

In the famous 
Bott periodicity theorem \cite{bot}, 
limits of increasing sequences of classical compact Lie groups
and associated spaces also appear naturally.

All these results explain that, beisdes the applications to
string theory and the theory of minimal surfaces,
  it is a natural and important problem to
consider universal moduli spaces of Riemann surfaces and
universal (or rather infinite dimensional) symmetric spaces of noncompact type and
their quotients by arithmetic groups.

\vspace{.2in}
\noindent{\em Acknowledgments}: We would like to thank
K.-H. Neeb for helpful conversations on infinite
dimensional symmetric spaces and for pointing out several
references. \\
This work was partially supported by a Simons Fellowship from the Simons Foundation 
(grant no.305526) to the first author and an ERC Advanced Grant (FP7/2007-2013 / ERC grant agreement n$^\circ$~267087)  to the second author.

\end{document}